\newtheorem{theorem}{Theorem}[section]
\newtheorem{corollary}[theorem]{Corollary}
\newtheorem{lemma}[theorem]{Lemma}
\newtheorem{proposition}[theorem]{Proposition}
\theoremstyle{definition}
\newtheorem{remark}[theorem]{Remark}
\newtheorem{example}[theorem]{Example}
\numberwithin{equation}{section}
\title{Region crossing change on surfaces}
\author{Jiawei Cheng}
\address{School of Mathematical Sciences, Beijing Normal University, Beijing 100875, China}
\email{201711130214@mail.bnu.edu.cn}
\author{Zhiyun Cheng}
\address{School of Mathematical Sciences, Beijing Normal University, Beijing 100875, China}
\email{czy@bnu.edu.cn}
\author{Jinwen Xu}
\address{School of Mathematical Sciences, Beijing Normal University, Beijing 100875, China}
\email{201711130184@mail.bnu.edu.cn}
\author{Jieyao Zheng}
\address{School of Mathematical Sciences, Beijing Normal University, Beijing 100875, China}
\email{201711130196@mail.bnu.edu.cn}
\subjclass[2010]{57M25, 57M27}
\keywords{unknotting operation; region crossing change}
\begin{document}

\begin{abstract}
Region crossing change is a local operation on link diagrams. The behavior of region crossing change on $S^2$ is well understood. In this paper, we study the behavior of (modified) region crossing change on higher genus surfaces.
\end{abstract}

\maketitle

\section{Introduction}\label{section1}
In knot theory, an unknotting operation usually means a special local operation in a tangle such that any knot can be transformed to the unknot by a finite sequence of such local operations. One example of unknotting operation is crossing change, due to the fact that ascending knot diagrams represent the unknot. Crossing change is a very important local operation in knot theory. Investigating the behaviors of knot invariants under one crossing change plays an important role in the study of unknotting number. See \cite{Sch1998} for a nice expository survey about crossing change. Besides of crossing change, there are several other well studied unknotting operations in knot theory. For examples, $\sharp$-operation \cite{Mur1985}, $\triangle$-operation \cite{Mur1989} and $n$-gon move \cite{Aid1992} illustrated in Figure \ref{figure1} are all proved to be unknotting operations.
\begin{figure}[h]
\centering
\includegraphics{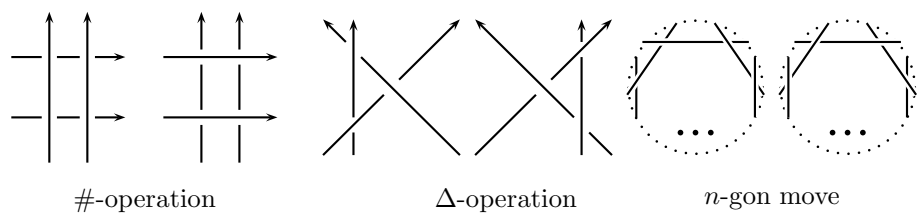}\\
\caption{Some unknotting operations}\label{figure1}
\end{figure}

In this paper we are concerned with a new local operation, say the \emph{region crossing change}, which was first introduced by Ayaka Shimizu in \cite{Shi2014}. Let $K$ be a knot diagram, and $R$ a region of $R^2\setminus K$ (for simplicity, we also call $R$ a region of $K$), then applying region crossing change on $R$ yields a new knot diagram, which is obtained from $K$ by reversing all the crossing points incident to $R$. Figure \ref{figure2} indicates how to convert the trefoil knot into the unknot by applying region crossing change on region $R$. In fact, not only this diagram of trefoil knot can be transformed into a diagram representing the unknot. Let $K$ be a knot diagram and $S$ a set of some crossing points in $K$. Following \cite{Che2013}, we say $S$ is \emph{region crossing change admissible} if one can obtain a new knot diagram $K'$ from $K$ by a finite sequence of region crossing changes, here $K'$ is obtained from $K$ by switching all the crossing points in $S$. As the main result of \cite{Shi2014}, Ayaka Shimizu actually proved the following result.

\begin{figure}
\centering
\includegraphics{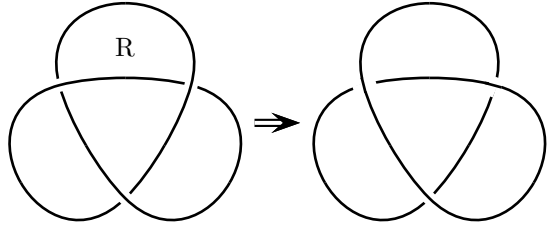}\\
\caption{Region crossing change on $R$}\label{figure2}
\end{figure}

\begin{theorem}[\cite{Shi2014}]\label{theorem1}
Each crossing point in a knot diagram is region crossing change admissible.
\end{theorem}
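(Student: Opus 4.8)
The plan is to recast the entire problem as a single surjectivity statement in linear algebra over the field $\mathbb{F}_2$ with two elements. Suppose $K$ has crossings $c_1,\dots,c_n$; by Euler's formula the diagram cuts the plane (together with the outer region) into exactly $n+2$ regions $R_1,\dots,R_{n+2}$. I would record the incidences in a matrix $M\in\mathbb{F}_2^{\,n\times(n+2)}$, setting $M_{ij}=1$ precisely when an odd number of the four corners of $c_i$ lie on $R_j$ (so $M_{ij}=1$ iff $R_j$ occupies exactly one corner of $c_i$, a region occupying two opposite corners contributing $0$). The point is that applying region crossing change to the regions indexed by a subset $S$, with characteristic vector $\chi_S\in\mathbb{F}_2^{\,n+2}$, switches exactly the crossings recorded by $M\chi_S$. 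Theorem \ref{theorem1} is then equivalent to the statement that every standard basis vector $e_i$ lies in the image of $M$, i.e. that $M$ is \emph{surjective}.

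Since $M$ maps into $\mathbb{F}_2^{\,n}$, surjectivity is the same as $\operatorname{rank}M=n$, and by rank--nullity, $\dim\ker M+\operatorname{rank}M=n+2$, this is equivalent to $\dim\ker M=2$. The easy half is $\dim\ker M\ge 2$: I would fix a checkerboard colouring of the regions, which exists because the four-valent shadow is a planar graph. At each crossing the four corners alternate in colour, so every crossing meets two black and two white corners; hence the characteristic vectors $\chi_B$ and $\chi_W$ of the black and of the white regions both lie in $\ker M$, and being nonzero with disjoint supports they are linearly independent.

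The heart of the matter, and the step I expect to be the main obstacle, is the reverse inequality $\dim\ker M\le 2$; this is exactly where the hypothesis that $K$ is a \emph{knot}, a single component, has to be used. I would take any $S$ with $\chi_S\in\ker M$, so that each crossing meets an even number of corners lying in $S$, and trace the single closed curve $K$. Along any edge there is a well-defined region on the left and a region on the right, and I set $f=\chi_S(\text{left})+\chi_S(\text{right})\in\mathbb{F}_2$. This $f$ is constant along an edge; when the strand passes through a crossing, the region on the left and the region on the right before and after the crossing are precisely the four corner-regions of that crossing, so the jump $f_{\text{before}}+f_{\text{after}}$ equals the incidence parity of the crossing, which is $0$ by the kernel condition. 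Thus $f$ is constant along the whole curve, and because $K$ has a single component this forces one global constant $\epsilon\in\{0,1\}$ for every edge.

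Finally I would read off the two cases, using that the region adjacency graph of a connected diagram is connected and bipartite with the checkerboard colouring as its bipartition. If $\epsilon=0$, then across every edge the two regions share their membership in $S$, so by connectivity $S$ is empty or all of the regions; if $\epsilon=1$, then across every edge the two regions differ, so $S$ is one of the two colour classes. In all cases $\chi_S\in\operatorname{span}\{\chi_B,\chi_W\}$, giving $\dim\ker M\le 2$. Combining the two inequalities yields $\dim\ker M=2$, hence $\operatorname{rank}M=n$ and the surjectivity of $M$, which is the theorem. I would be careful to invoke single-componentness (for the global constancy of $f$) and connectivity of the diagram (for the case analysis) explicitly, since these are the only places where the argument can fail for split or multi-component diagrams.
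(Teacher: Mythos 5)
Your reduction to linear algebra encodes the wrong operation, and this is a genuine gap, not a cosmetic one. In your definition of $M$ a region occupying two opposite corners of a crossing contributes $0$; but Theorem \ref{theorem1} is about the \emph{original} region crossing change, under which applying the move at a region $R$ reverses \emph{every} crossing incident to $R$ once each --- including a nugatory crossing that $R$ touches at two opposite corners. Your convention is exactly the \emph{modified} (double-counting) rule that this paper only introduces in Section \ref{section3}, so your claim that the moves indexed by $S$ switch precisely the crossings recorded by $M\chi_S$ is false whenever the diagram has a nugatory crossing; and nugatory crossings cannot be excluded or removed, since Reidemeister moves are prohibited in this game. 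The discrepancy is visible in the paper itself: in Example \ref{example1} the region $R_7$ touches the crossing $C_5$ twice, and the Section \ref{section2} incidence matrix (the one relevant to Theorem \ref{theorem1}) has a $1$ in that entry, whereas your matrix would have a $0$. Both halves of your kernel computation then break for the true matrix: at a kink, the region meeting the kink crossing at two opposite corners \emph{does} switch that crossing, so the characteristic vector of the colour class containing that region is not in the kernel of the original incidence matrix; and for $\chi_S$ in that kernel your function $f$ need not be constant, because the jump at a crossing equals the corner-parity sum (your $(M\chi_S)_i$), which differs from the original incidence condition exactly at nugatory incidences.

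What your argument does prove, correctly and rather elegantly, is the statement for the modified region crossing change --- essentially Corollary \ref{corollary3} of this paper restricted to knot diagrams on the plane: for the parity matrix, $\chi_B,\chi_W$ do lie in the kernel, and your strand-tracing argument (using single-componentness and connectivity of the dual graph) correctly pins the kernel down to $\operatorname{span}\{\chi_B,\chi_W\}$. But, as the remark following Corollary \ref{corollary3} points out, the double-counting statement is ``much simpler than the original one''; the entire difficulty of Shimizu's theorem lies in the nugatory crossings, which your setup defines away. To repair the proof you would need a separate argument producing, for each nugatory crossing, regions realizing that single crossing change under the original rule (this is what the algorithms of \cite{Shi2014} and \cite{Che2012} accomplish), or a direct computation that the original incidence matrix still has $\mathbb{Z}_2$-rank $n$; its kernel is indeed still $2$-dimensional, but it is no longer spanned by the two colour classes, so your argument does not transfer.
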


It follows immediately that region crossing change is an unknotting operation for knot diagrams. In other words, for any knot diagram one can always find some regions of it, such that applying region crossing change on these regions yields a diagram representing the unknot. Note that Reidemeister moves are prohibited during this process, otherwise one can simply use some $\sharp$-operations or $n$-gon moves to untie any given knot, since both of them are special cases of region crossing change. However, it is not difficult to observe that in general region crossing change can not convert every link into a trivial link (consider Hopf link). A natural question is when region crossing change is an unknotting operation for link diagrams. This question was answered by the second author in \cite{Che2013}.

\begin{theorem}[\cite{Che2013}]\label{theorem2}
Let $L=K_1\cup\cdots\cup K_n$ be a link diagram, then region crossing change is an unknotting operation on $L$ if and only if for any $1\leq i\leq n$ we have $\sum\limits_{j\neq i}lk(K_i, K_j)=0$ (mod 2).
\end{theorem}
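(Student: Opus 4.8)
The plan is to translate the problem into linear algebra over $\mathbb{F}_2=\mathbb{Z}/2\mathbb{Z}$ and then isolate the linking numbers as the only obstruction. Fix the shadow (underlying $4$-valent graph) $G$ of $L$ on $S^2$ and let $C$ be its set of crossings. A crossing assignment is a vector in $\mathbb{F}_2^{C}$ recording over/under relative to $L$, and performing a region crossing change on a region $R$ adds to the current assignment the vector $r_R\in\mathbb{F}_2^{C}$ whose $x$-entry is the number of corners of $R$ at $x$, taken mod $2$. Hence the set of assignments reachable from $L$ is the coset $L+W$, where $W\subseteq\mathbb{F}_2^{C}$ is the span of all the $r_R$. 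Since a descending diagram on the shadow $G$ represents the trivial link, and since every diagram of the trivial link has all pairwise linking numbers zero, I would reduce the theorem to the statement: the descending assignment $D$ satisfies $D-L\in W$ if and only if the stated parity condition holds. Thus everything comes down to understanding $W$, or equivalently its orthogonal complement $W^{\perp}$.

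For each component $K_i$ define $v_i\in\mathbb{F}_2^{C}$ by $v_i(x)=1$ exactly when $x$ is a crossing between $K_i$ and some other component (a \emph{mixed} crossing at $K_i$). The first key step is to show $v_i\in W^{\perp}$, i.e. $r_R\cdot v_i=0$ for every region $R$. This is a parity count along $\partial R$: traversing its boundary circle one meets a cyclic sequence of edges, and at each corner the two incident edges lie on the two strands of that crossing; consequently $v_i$ sums, over the corners of $R$, to the number of places where the boundary switches between lying on $K_i$ and not lying on $K_i$, which is even. Granting this, I would next observe that switching a single crossing $x$ flips the parity of $\sum_{j\neq i}lk(K_i,K_j)$ precisely when $v_i(x)=1$; hence for any reachable assignment $L+S$ with $S\in W$ one has $\langle S,v_i\rangle=\sum_{j\neq i}lk(K_i,K_j)[L]-\sum_{j\neq i}lk(K_i,K_j)[L+S]\pmod 2$. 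Applying this to a descending (trivial) target, whose linking numbers all vanish, gives the necessity direction: if $L$ can be trivialized then $\sum_{j\neq i}lk(K_i,K_j)\equiv 0$ for every $i$.

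For sufficiency I would prove the converse inclusion, which I regard as the heart of the argument:
\[
W^{\perp}=\langle v_1,\dots,v_n\rangle .
\]
Once this is known, the parity hypothesis says exactly that $D-L$ is orthogonal to every $v_i$, hence $D-L\in (W^{\perp})^{\perp}=W$, so the descending trivial diagram is reachable and region crossing change unknots $L$. Note that for $n=1$ all crossings are self-crossings, so $v_1=0$ and the displayed identity reads $W^{\perp}=0$, i.e. $W=\mathbb{F}_2^{C}$; this is precisely Theorem \ref{theorem1}, so the displayed identity is the correct multi-component generalization and Shimizu's theorem should be used as the base case.

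The main obstacle is the inclusion $W^{\perp}\subseteq\langle v_1,\dots,v_n\rangle$, i.e. showing there is no obstruction beyond the linking numbers. My plan is an induction on the number of crossings that reduces the shadow by local (Reidemeister-type) simplifications of $G$, removing a monogon or a bigon, while tracking how $W^{\perp}$ changes; the monogon/self-crossing steps are controlled by Theorem \ref{theorem1}, and the bigon steps, which may involve mixed crossings, are where the span of the $v_i$ enters. An alternative I would keep in reserve is to take $v\in W^{\perp}$ and first use the $v_i$ to reduce $v$ to a vector supported only on self-crossings, then apply the knot case (Theorem \ref{theorem1}) component by component to conclude $v=0$ after the reduction. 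The delicate point in either route is the bookkeeping of mixed crossings, where the parity constraints couple distinct components, and making sure the count matches $\dim\langle v_1,\dots,v_n\rangle=n-p$ with $p$ the number of classes of components grouped by mixed-crossing adjacency.
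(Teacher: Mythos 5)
First, a point of order: this paper does not prove Theorem \ref{theorem2} at all --- it is quoted from \cite{Che2013} --- so your proposal can only be measured against that reference and against the machinery the paper does set up (Propositions \ref{proposition1} and \ref{proposition2}). Your architecture is the standard one and matches that machinery: describe the reachable diagrams as a coset $L+W$ over $\mathbb{Z}_2$, exhibit the vectors $v_i$ as elements of $W^{\perp}$ (equivalently, region crossing change preserves each $\sum_{j\neq i}lk(K_i,K_j)$ mod $2$), and reduce sufficiency to $W^{\perp}=\langle v_1,\dots,v_n\rangle$. Your necessity half is essentially correct. One caveat you should address: you define $r_R$ by corner counts mod $2$, which is the \emph{modified} operation of Section \ref{section3}, whereas Theorem \ref{theorem2} concerns Shimizu's original operation, in which a crossing incident to $R$ is switched once regardless of multiplicity. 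On $S^2$ this conflation happens to be harmless at mixed crossings: a region meets a mixed crossing in at most one corner, since two opposite corners would make the crossing nugatory (hence a self-crossing), and two adjacent corners would force an edge with the same region on both sides, which cannot occur because a $4$-regular plane graph has no bridges. So $v_i\perp r_R$ holds for both versions; but this needs to be said, because the two operations genuinely differ (that distinction is the whole point of Section \ref{section5}).

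The genuine gap is the sufficiency step, which you yourself flag as the heart: $W^{\perp}\subseteq\langle v_1,\dots,v_n\rangle$ is stated as a plan, not proved, and this identity \emph{is} the theorem --- it is equivalent to the rank formula $\mathrm{rank}_{\mathbb{Z}_2}(M_L)=c-n+s$ of \cite{Che2012} ($s$ the number of connected components of the projection), which is the actual content of \cite{Che2012,Che2013}. Neither of your routes is carried out, and the second fails as described: for $v\in W^{\perp}$ you give no reason why the restriction of $v$ to the mixed crossings should lie in the span of the restricted $v_i$'s (a mixed crossing between $K_a$ and $K_b$ contributes $e_a+e_b\in\mathbb{Z}_2^n$, so this is precisely the nontrivial assertion, not a preliminary reduction); moreover, the ``component by component'' appeal to Theorem \ref{theorem1} requires that the region vector of a region of the sub-diagram $K_i$ be the sum of the vectors of the $L$-regions it contains, which is true for corner-count vectors but false for the original indicator vectors (two sub-regions each meeting a crossing once sum to $0$ there), i.e.\ it breaks exactly for the operation the theorem is about. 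If you are allowed to quote the rank formula (Proposition \ref{proposition2} here), your proof closes immediately by dimension count: $\dim W^{\perp}=c-\mathrm{rank}_{\mathbb{Z}_2}(M_L)=n-s$, while the only relations among the $v_i$ are $\sum_{i\in S}v_i=0$ for $S$ a union of projection components, so $\dim\langle v_1,\dots,v_n\rangle=n-s$ as well (your $p$ equals $s$), and your inclusion $\langle v_1,\dots,v_n\rangle\subseteq W^{\perp}$ becomes equality. Without that citation, or a reproof of it, the argument is incomplete.
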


As a corollary, we find that whether region crossing change is an unknotting operation does not depend on the choice of the diagram. It is an intrinsic property of the link itself. Furthermore, the effect of region crossing change on link diagrams is well understood. For a given link diagram, by ignoring all the crossing information one obtains a 4-valent graph on the plane. We call this graph the \emph{medial graph} or simply the \emph{projection} of the given link diagram. Notice that if two link diagrams can be connected by some region crossing changes then they must have the same projection, since Reidemeister moves are prohibited. Now we assume $L=K_1\cup\cdots\cup K_n$ and $L'=K_1'\cup\cdots\cup K_n'$ are two link diagrams on the plane with the same projection. Then $L$ and $L'$ are related by a finite sequence of region crossing changes if and only if
\begin{center}
$\sum\limits_{j\neq i}lk(K_i, K_j)=\sum\limits_{j\neq i}lk(K_i', K_j')$ (mod 2)
\end{center}
for any $1\leq i\leq n$.

A variety of (modified) region crossing changes can be found in \cite{Aha2012,Hay2015} and \cite{Ino2016}.

Since the behavior of region crossing changes on the plane (or $S^2$) is well understood, in this paper we will move from $S^2$ to closed orientable surfaces with higher genera. We will count the number of equivalence classes of link diagrams with the same projection on surfaces under region crossing changes (with a little modification). Recently, Dasbach and Russell considered an analogous question in \cite{Das2018}. The differences between our results and that in \cite{Das2018} will be discussed in Section \ref{section3} and Section \ref{section4} in detail.

The rest of this paper is organized as follows. Section \ref{section2} introduces the incidence matrix of a link diagram. It turns out that in order to study region crossing change, it suffices to investigate the $\mathbb{Z}_2$-rank of the incidence matrix. In Section \ref{section3} we turn to region crossing changes on surfaces. First we modify the definition of region crossing change a little bit. Then for a given link diagram (or a projection) $L$ we define a graph $G_L$ based on the behavior of region crossing changes. Several properties of this graph will be discussed. Section \ref{section4} is devoted to prove the main result, Theorem \ref{theorem4}. The last section, as its title suggests, contains some results about the  original region crossing changes on surfaces and also some problems we encounter in this case.

\section{Region crossing change and incidence matrix}\label{section2}
For a given graph $G$, let us use $V_G$ and $E_G$ to denote the vertex set and edge set respectively. Recall that the incidence matrix $M_G$ can be defined in the following manner
\begin{center}
$M_G=(m_{x, y}), \quad x\in V_G$ and $y\in E_G$
\end{center}
where
\begin{center}
$m_{x, y}=
\begin{cases}
1& \text{if $y$ is incident to $x$;}\\
0& \text{otherwise.}
\end{cases}$
\end{center}

Now we want to use the incidence matrix of a graph to define an incidence matrix for a link diagram. Let $L$ denote a link diagram. Since the projection of $L$ is a 4-valent planar graph, it follows that it is possible to color all the regions of $L$ in checkerboard fashion. In other words, each region will be colored white or black, such that any two regions share the same arc on the boundary have different colors. Without loss of generality, we always assume that the unbounded region has the white color. Now we assign a vertex to each black region, and if two black regions are connected by several crossing points then we add the same number of edges between the corresponding two vertices. In this way we obtain a planar graph $T_L$, called the \emph{Tait graph} associated to $L$.

\begin{remark}
One can also assign a sign to each edge of $T_L$ to obtain the \emph{signed Tait graph}, from which $L$ can be recovered. But we do not need it here.
\end{remark}

Denote the dual graph of $T_L$ by $T_L'$. Notice that there is a 1-1 correspondence between $E_{T_L}$ and $E_{T_L'}$. By using this correspondence we can put the two incidence matrices together to obtain a new matrix $M_L$ as follows
\begin{center}
$M_L=\begin{pmatrix}
M_{T_L}\\
M_{T_L'}
\end{pmatrix}_{r\times c},$
\end{center}
here $r$ and $c$ denote the number of regions and crossings in $L$ respectively. We name $M_L$ the \emph{incidence matrix} of $L$ and remark that $M_L$ actually is not well-defined. It depends on the order of regions and the order of crossing points. However the rank of it is well defined.

\begin{example}\label{example1}
Consider the figure-8 knot diagram $K$ illustrated in Figure \ref{figure3}, and now we have
\begin{center}
$M_K=\begin{pmatrix}
1&1&0&0&0\\
1&0&1&1&0\\
0&1&1&1&1\\
0&0&0&0&1\\
1&1&1&0&0\\
0&0&1&1&0\\
1&1&0&1&1
\end{pmatrix}.$
\end{center}
\end{example}

\begin{figure}
\centering
\includegraphics{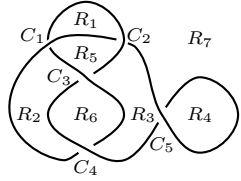}\\
\caption{A figure-8 knot diagram}\label{figure3}
\end{figure}

From Example \ref{example1} it is easy to observe that the incidence matrix is closely related to the operation region crossing change. Let us make it more clear. As before, assume $L$ is a link diagram on the plane (or $S^2$). Let us use $\{R_1, \cdots, R_r\}$ and $\{C_1, \cdots, C_c\}$ to denote the set of regions and the set of crossing points respectively. Now the incidence matrix $M_L$ is a $r\times c$ matrix and each entry belongs to $\mathbb{Z}_2(=\mathbb{Z}/2\mathbb{Z})$, the field consists of two elements 0 and 1. Note that $L$ may be splittable, hence $r\neq c+2$ in general. According to the definition of $M_L$, there exists a one-to-one correspondence between the regions and the row vectors of $M_L$. Moreover, the positions of 1's of a row vector tells us which crossing points will be switched if we apply region crossing change on the corresponding region. In order to understand the effect of region crossing change on some regions, one only needs to read the positions of 1's in the sum of the corresponding row vectors. Throughout the entire paper when we perform row or column operations we always work over $\mathbb{Z}_2$ coefficients unless otherwise specified. Let us abuse our notation, letting $R_i$ refer both to a region of $L$ and to the corresponding row vector of $M_L$. Now Theorem \ref{theorem1} can be restated as, for a knot diagram $K$, each row vector $(0, \cdots, 0, 1, 0, \cdots, 0)$ can be written as the sum of some row vectors of $M_K$. In other words, rank$_{\mathbb{Z}_2}(M_K)=c$. Recall that for a knot diagram we always have $r=c+2$.

\begin{remark}
With a given unknotting operation, it is natural to consider the associated unknotting number. If one defines the unknotting number with respect to region crossing change to be the minimal number of region crossing changes needed to convert a knot $K$ to the unknot among all the knot diagrams of $K$, then any nontrivial knot has unknotting number one \cite{Aid1992}.
\end{remark}

By ignoring the over/undercrossing information of $L$, we obtain the projection of $L$. Since each crossing point can be assigned an overcrossing or an undercrossing, there are totally $2^c$ different link diagrams corresponding to this projection. Assume each component has a fixed orientation, we assign 0 (1) to a crossing point if it is positive (negative). Now each of these $2^c$ link diagrams corresponds to a row vector in $\mathbb{Z}_2^c$. We say two link diagrams are \emph{(region crossing change) equivalent} if they are related by a finite sequence of region crossing changes. Equivalently speaking, two link diagrams with the same projection are equivalent if the difference of the corresponding row vectors can be represented as the sum of some row vectors in $M_L$. The result below follows immediately.

\begin{proposition}\label{proposition1}
Let $L$ be a link diagram with $c$ crossing points, among all the link diagrams with the same projection as $L$, the number of equivalence classes under region crossing changes equals $2^{c-\text{rank}_{\mathbb{Z}_2}(M_L)}$. In particular, every crossing point is region crossing change admissible if and only if $\text{rank}_{\mathbb{Z}_2}(M_L)=c$.
\end{proposition}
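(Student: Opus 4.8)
The plan is to recast the entire question in the language of the $\mathbb{Z}_2$-vector space $\mathbb{Z}_2^c$, exactly as the preceding discussion anticipates. First I would fix the identification of each of the $2^c$ link diagrams sharing the projection of $L$ with its state vector in $\mathbb{Z}_2^c$, recording $0$ or $1$ at each crossing according to the sign convention already established. The key observation to nail down is that performing region crossing change on a single region $R_i$ sends the current state vector $v$ to $v+R_i$, where $R_i$ is read as the corresponding row of $M_L$; this is immediate from the definition of $M_L$, whose $(i,j)$ entry records whether crossing $C_j$ is incident to region $R_i$.

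Next I would promote this to arbitrary finite sequences of region crossing changes. Since we work over $\mathbb{Z}_2$, applying the same region twice has no net effect and the operations commute, so the cumulative effect of any sequence of region crossing changes is to add $\sum_{i\in S}R_i$ to the state vector for some subset $S$ of the regions. Conversely, every element of the row space of $M_L$ arises in this way, because over $\mathbb{Z}_2$ the row space is precisely the set of subset sums $\{\sum_{i\in S}R_i : S\subseteq\{1,\dots,r\}\}$. Hence the set of all achievable changes is exactly the row space $W:=\operatorname{rowspace}(M_L)\subseteq\mathbb{Z}_2^c$, a subspace of dimension $\text{rank}_{\mathbb{Z}_2}(M_L)$.

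With this dictionary in hand the counting becomes a coset argument. Two diagrams with state vectors $v,v'$ are equivalent precisely when $v-v'\in W$ (and over $\mathbb{Z}_2$ one has $v-v'=v+v'$), so the equivalence classes are exactly the cosets of $W$ in $\mathbb{Z}_2^c$. Their number is $|\mathbb{Z}_2^c|/|W|=2^c/2^{\dim W}=2^{\,c-\text{rank}_{\mathbb{Z}_2}(M_L)}$, which is the claimed value. For the final assertion, a single crossing $C_j$ is region crossing change admissible iff the standard basis vector $e_j\in W$; every crossing is admissible iff all $e_j\in W$, which, since $\{e_j\}$ spans $\mathbb{Z}_2^c$, holds iff $W=\mathbb{Z}_2^c$, i.e.\ iff $\text{rank}_{\mathbb{Z}_2}(M_L)=c$.

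This argument is essentially bookkeeping once the correspondence between region crossing change and row addition is set up, and I do not expect a serious obstacle. If anything demands care, it is the two directions of the identity ``achievable changes $=W$'': in particular, verifying that subset sums of the rows exhaust the full row space over $\mathbb{Z}_2$ (which holds because $\mathbb{Z}_2$-linear combinations have coefficients in $\{0,1\}$), and confirming that no topological subtlety obstructs realizing a prescribed subset $S$ of regions as an actual finite sequence of region crossing changes.
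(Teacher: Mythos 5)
Your proof is correct and takes essentially the same approach as the paper: the paper establishes exactly your dictionary (diagrams as vectors in $\mathbb{Z}_2^c$, region crossing changes as addition of row vectors of $M_L$, equivalence as difference lying in the row space) and then states the proposition as an immediate consequence, which is precisely the coset-counting argument you spell out. No gaps.
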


Consequently, if one wants to count the number of equivalence classes it suffices to calculate rank$_{\mathbb{Z}_2}(M_L)$. For link diagrams on the plane (or $S^2$), we have the following result, which can be regarded as a generalization of Theorem 1.4 in \cite{Che2012}.

\begin{proposition}\label{proposition2}
Let $L=K_1\cup\cdots\cup K_n$ be a link diagram on the plane, and $r$ the number of regions, then $\text{rank}_{\mathbb{Z}_2}(M_L)=r-n-1$.
\end{proposition}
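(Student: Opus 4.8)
The plan is to compute the rank through the dimension of the left kernel. Write $N=\{\lambda\in\mathbb{Z}_2^r:\lambda^{\mathsf T}M_L=0\}$ for the space of $\mathbb{Z}_2$-linear relations among the rows (regions) of $M_L$; then $\mathrm{rank}_{\mathbb{Z}_2}(M_L)=r-\dim_{\mathbb{Z}_2}N$, so it suffices to prove $\dim_{\mathbb{Z}_2}N=n+1$. A vector $\lambda\in\mathbb{Z}_2^r$ is just an assignment of a value in $\mathbb{Z}_2$ to each region, and $\lambda\in N$ says that at every crossing the sum of the values of the \emph{distinct} regions incident to it is $0$. I will first treat the case in which $L$ is \emph{reduced}, meaning no crossing has a single region occupying two of its four corners, so that the four corners at each crossing lie in four distinct regions, and then reduce the general case to this one.

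The key step is a local reformulation at a reduced crossing $c$. If the four regions around $c$ are $R_1,R_2,R_3,R_4$ in cyclic order, the condition $\lambda_{R_1}+\lambda_{R_2}+\lambda_{R_3}+\lambda_{R_4}=0$ is exactly the statement that the jump $\lambda_R+\lambda_{R'}$ of $\lambda$ across a strand, computed on the two sides of $c$, agrees. Travelling along a component $K_i$ and passing through the crossings lying on it, this forces the jump of $\lambda$ across $K_i$ to be a well-defined constant $\delta_i(\lambda)\in\mathbb{Z}_2$. The assignment $\lambda\mapsto(\lambda_{R_\infty};\delta_1(\lambda),\dots,\delta_n(\lambda))\in\mathbb{Z}_2^{n+1}$, with $R_\infty$ the unbounded region, is $\mathbb{Z}_2$-linear and injective: if all $\delta_i$ vanish then $\lambda$ takes equal values on any two adjacent regions, hence is constant on the (connected) region-adjacency graph, and $\lambda_{R_\infty}=0$ then forces $\lambda=0$. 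This yields $\dim_{\mathbb{Z}_2}N\le n+1$.

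For the reverse inequality I would exhibit $n+1$ independent members of $N$. For each component $K_i$ let $x^{(i)}_R\in\mathbb{Z}_2$ be the mod $2$ winding number of $K_i$ about a point of $R$, and let $\mathbf 1$ be the all-ones vector. Checking each reduced crossing, split into the cases where $0$, $1$, or $2$ of its two strands belong to $K_i$, gives $x^{(i)}\in N$; and $\mathbf 1\in N$ since each crossing meets four distinct regions. These $n+1$ vectors are independent: evaluation at $R_\infty$ (where every $x^{(i)}$ vanishes while $\mathbf 1$ does not) removes $\mathbf 1$, and a relation $\sum_{i\in A}x^{(i)}=0$ with $A\neq\emptyset$ is impossible, because crossing any arc of a fixed $K_{i_0}\in A$ changes $\sum_{i\in A}x^{(i)}$ by exactly $1$. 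Hence $\dim_{\mathbb{Z}_2}N\ge n+1$, and together with the previous paragraph $\dim_{\mathbb{Z}_2}N=n+1$, giving the formula for reduced diagrams.

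The main obstacle is the non-reduced case, that is, \emph{reducible} (nugatory) crossings at which one region occupies two opposite corners; there the local identity breaks down ($\mathbf 1$ need not lie in $N$, and the jump across a strand need not be preserved), so both halves of the argument must be repaired. I plan to remove such crossings one at a time, keeping $n$ fixed. When the reducible crossing bounds a monogon, the enclosed region touches only that crossing, so $M_L$ has a row equal to a standard basis vector $e_c$; this row–column pair is a pivot, and deleting it drops both $\mathrm{rank}_{\mathbb{Z}_2}(M_L)$ and $r$ by exactly $1$ while returning the incidence matrix of the simpler diagram, so the identity $r-n-1$ is preserved by induction. The genuinely delicate point, which I expect to cost the most care, is a reducible crossing that is a connected-sum point rather than a monogon: there is no monovalent region to pivot on, and one must instead show that cutting along the separating circle through the crossing splits the rank computation additively in a manner consistent with $r-n-1$.
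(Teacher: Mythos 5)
Your argument for \emph{reduced} diagrams (all four corners at every crossing in distinct regions) is correct, and it is a genuinely different route from the paper's: you compute the left kernel directly, showing $\dim N\le n+1$ via the well-defined jump $\delta_i(\lambda)$ across each component together with evaluation at $R_\infty$, and $\dim N\ge n+1$ via the mod-2 winding vectors $x^{(i)}$ and $\mathbf{1}$; the paper instead quotes the formula $\mathrm{rank}_{\mathbb{Z}_2}(M_L)=c-n+1$ for connected diagrams from the earlier paper of Cheng--Gao and extends it to disconnected projections by a block-matrix argument. Your monogon reduction is also correct: the monogon row equals $e_c$, pivoting on it splits off the column of $c$, and what remains is exactly the incidence matrix of the diagram with the kink removed, so $r-\mathrm{rank}_{\mathbb{Z}_2}(M_L)$ is preserved.

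The gap is the case you yourself flag at the end: a nugatory crossing that is a connected-sum point is only \emph{announced}, not handled, and this is not a routine missing detail --- it is where the real content of the proposition lives. Untwisting such a crossing $c$ (rotating the inside of the separating circle by $\pi$) deletes the column of $c$ \emph{and} merges the rows of the two regions $P$ (inside) and $Q$ (outside) sitting at opposite corners of $c$; you must show the rank drops by exactly $1$. A priori it can drop by $0$, $1$, or $2$ (column deletion costs at most one, the row merge costs at most one more), so the equality needs the geometric structure: since the separating circle minus $c$ lies in the bicorner region $A$, every other region lies entirely inside or outside, giving a block form of $M_L$, and even with that block form the count requires either a simultaneous induction carrying auxiliary information about the left kernels of the two halves, or an appeal to the connected-diagram result of Cheng--Gao / Shimizu --- precisely the input your proof set out to avoid. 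Nor can you bypass the reduction by running the kernel argument directly on non-reduced diagrams: at a nugatory crossing the column of $M_L$ has three $1$'s, so both halves of your argument genuinely fail there, and the kernel contains vectors not in the span of $\{x^{(i)},\mathbf{1}\}$; for the figure-eight-shaped curve (one crossing, regions $P,Q,A$) the kernel is spanned by $(1,1,0)$ and $(1,0,1)$, while $\mathbf{1}\notin N$. As written, then, the proposal proves the proposition only for diagrams all of whose nugatory crossings bound monogons.
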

\begin{proof}
In \cite{Che2012}, it was proved that if the projection of $L$ is connected, then we have $\text{rank}_{\mathbb{Z}_2}(M_L)=c-n+1$. Now let us assume that the projection of $L$ has $s$ components, i.e. $L=L_1\cup\cdots\cup L_s$, where the projection of each $L_i$ $(1\leq i\leq s)$ is a connected component. Assume the sub-link $L_i$ has $c_i$ crossing points. It follows that $r=\sum\limits_{i=1}^s(c_i+2)-(s-1)=\sum\limits_{i=1}^sc_i+s+1=c+s+1$. By suitably choosing the order of regions and the order of crossings, the incidence matrix $M_L$ has the form
\begin{center}
$M_L=\begin{pmatrix}
M_{L_1}'&&&&\\
&M_{L_2}'&&&\\
&&\ddots&&\\
&&&M_{L_{s-1}}'&\\
&&&&M_{L_{s}}'\\
M_{1\times c_1}&M_{1\times c_2}&\cdots&M_{1\times c_{s-1}}&M_{1\times c_s}
\end{pmatrix},$
\end{center}
where
\begin{center}
$M_{L_1}=\begin{pmatrix}
M_{L_1}'\\
M_{1\times c_1}
\end{pmatrix}, \cdots, M_{L_s}=\begin{pmatrix}
M_{L_s}'\\
M_{1\times c_s}
\end{pmatrix}$.
\end{center}
Here the last row vector corresponds to the unbounded region. Since each column vector of $M_L$ contains at least three 1's, it follows that the bases of the column spaces of $\{M_{L_i}\}$ $(1\leq i\leq s)$ form a basis for the column space of $M_L$. Recall that $\text{rank}_{\mathbb{Z}_2}(M_{L_i})=c_i-n_i+1$, where $n_i$ denotes the number of knot components of $L_i$. As a consequence, we have
\begin{center}
$\text{rank}_{\mathbb{Z}_2}(M_L)=\sum\limits_{i=1}^s\text{rank}_{\mathbb{Z}_2}(M_{L_i})=\sum\limits_{i=1}^s(c_i-n_i+1)=c-n+s=r-s-1-n+s=r-n-1.$
\end{center}
\end{proof}

\begin{remark}
Instead of the column space, one can also prove Proposition \ref{proposition2} by considering the row space. Actually, for each $1\leq i\leq s$ the row vector $M_{1\times c_i}$ can be written as a linear combination of some row vectors of $M_{L_i}'$. Let us take a moment to explain the reason. Recall that a crossing point in a link diagram is \emph{nugatory} if there exists a $S^1$ on the plane which meets the link diagram only at this crossing point. Obviously for a nugatory crossing point on the plane, there are only three regions around it. Otherwise, locally the four regions around a crossing point would be distinct. Now if the link diagram $L_i$ contains no nugatory crossing point, then $M_{1\times c_i}$ is equal to the sum of all row vectors in $M_{L_i}'$. If there exist some nugatory crossing points in $L_i$, by using the algorithm in \cite{Shi2014} or \cite{Che2012} one can suitably choose some regions of $L_i$ such that the sum of the corresponding row vectors in $M_{L_i}'$ equals $M_{1\times c_i}$, which also follows that $\text{rank}_{\mathbb{Z}_2}(M_L)=\sum\limits_{i=1}^s\text{rank}_{\mathbb{Z}_2}(M_{L_i})$.
\end{remark}

\begin{corollary}\label{corollary1}
Let $L=K_1\cup\cdots\cup K_n$ be a link diagram on the plane, then each crossing point is region crossing change admissible if and only if each $K_i$ is isolated.
\end{corollary}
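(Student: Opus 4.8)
The plan is to reduce the statement to a numerical identity by feeding Proposition \ref{proposition2} into Proposition \ref{proposition1}. By Proposition \ref{proposition1}, every crossing point of $L$ is region crossing change admissible if and only if $\text{rank}_{\mathbb{Z}_2}(M_L)=c$, and Proposition \ref{proposition2} evaluates this rank as $r-n-1$. Hence the admissibility of all crossings is equivalent to the single equation $r=c+n+1$.

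I would then invoke the region count obtained inside the proof of Proposition \ref{proposition2}. If the projection of $L$ has $s$ connected components, applying Euler's formula to each connected $4$-valent planar piece yields $r=c+s+1$. Comparing this with $r=c+n+1$ collapses the entire condition to the clean identity $s=n$.

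The last step, and the only one needing any thought, is to read off the geometric meaning of $s=n$. Each component $K_i$ is a connected closed curve, so its projection lies inside a single connected component of the projection of $L$; conversely, every connected component of the projection contains at least one $K_i$. This forces $s\leq n$ in all cases, with equality exactly when every connected component of the projection is the projection of a single $K_i$, that is, when no $K_i$ shares a crossing with another component, i.e. when every $K_i$ is isolated. Running this equivalence in both directions proves the corollary.

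I expect no genuine obstacle here, since the substance is already carried by the two propositions and the corollary is essentially a bookkeeping consequence. The only point requiring care is the final translation $s=n \Leftrightarrow$ each $K_i$ isolated, which rests on the standard inequality $s\leq n$ together with the observation that a single knot component has connected projection.
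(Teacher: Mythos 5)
Your proposal is correct and follows essentially the same route as the paper: combine Proposition \ref{proposition1} with Proposition \ref{proposition2}, use the region count $r=c+s+1$ from the proof of Proposition \ref{proposition2}, and reduce everything to $s=n$. The only difference is that you spell out the final equivalence $s=n \Leftrightarrow$ each $K_i$ is isolated (via $s\leq n$ and connectedness of each component's projection), a step the paper leaves implicit.
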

\begin{proof}
We use the same notation as above. According to Proposition \ref{proposition1}, each crossing point is region crossing change admissible if and only if $\text{rank}_{\mathbb{Z}_2}(M_L)=c$. Since $\text{rank}_{\mathbb{Z}_2}(M_L)=r-n-1$, it follows that $c=r-n-1=c+s+1-n-1=c+s-n$, which implies $s=n$.
\end{proof}

\begin{remark}
Proposition \ref{proposition2} provides a method to calculate the number of knot components of a link from the Tait graph and its dual. We want to remark that actually the Tait graph itself completely determines the component number. This is because the projection of the link diagram can be recovered from the Tait graph. Or, from another viewpoint, the dual graph depends on the embedding of the Tait graph. Due to Whitney's 2-isomorphism theorem \cite{Whi1933}, a pair of links corresponding to different embeddings of the same Tait graph can be connected by some mutations (e.g. \cite{CH2013}). Since mutation preserves the component number, it follows that the component number does not depend on the dual graph. Or, one can also consider the \emph{mod-2 Laplacian matrix} of the Tait graph. Denote the set of vertices of the Tait graph as $\{v_1, \cdots, v_m\}$, then the mod-2 Laplacian matrix is defined as $(a_{ij})_{m\times m}$, where $a_{ii}$ equals to the degree of $v_i$ and $a_{ij}$ equals to the number of edges between $v_i$ and $v_j$. As the name suggested, all entries take values in $\mathbb{Z}_2$. It was proved in \cite{God2001} that the number of components of the associated link is equal to the nullity of the matrix $(a_{ij})_{m\times m}$, which also means that the component number is determined by the Tait graph. A new self-contained proof of this result was given by Silver and Williams in \cite{Sil2015}.
\end{remark}

\section{Region crossing change on surfaces}\label{section3}
\subsection{Modified region crossing change}
Now we turn to study the behavior of region crossing changes on link diagrams on $\Sigma_g$. Here $\Sigma_g$ denotes a closed orientable surface of genus $g$. When the ambient space is changed from $S^2$ to $\Sigma_g$, there are two main differences:
\begin{enumerate}
\item For a given crossing point, locally there exist four regions around it. For $S^2$, there are two possibilities: these four regions are mutually distinct, or two opposite regions are part of the same region. In the second case, this crossing point must be nugatory. However, if the ambient space is $\Sigma_g$, one region can appears $i$ times around a crossing point, here $i$ runs over $\{0, 1, 2, 3, 4\}$. For example, consider the link diagram on $T^2$ (see Figure \ref{figure4}), the region $R$ appears $i$ times around $C_i$ ($0\leq i\leq 4$).
\item For a link diagram on $S^2$, it is well known that we can always color all the regions in checkerboard fashion. However, this is not the case for all link diagrams on $\Sigma_g$. For example, the link diagram in Figure \ref{figure4} does not admit a checkerboard fashion coloring. Note that the fact of existing a checkerboard fashion coloring plays an important role in \cite{Shi2014} and \cite{Das2018}.
\end{enumerate}

\begin{figure}[h]
\centering
\includegraphics{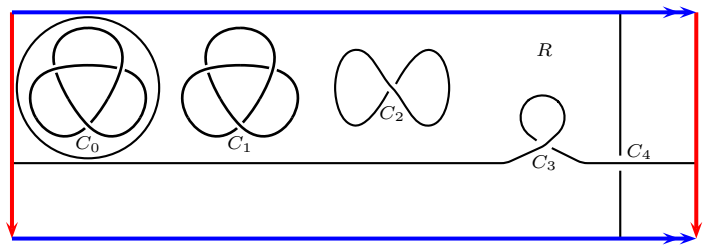}\\
\caption{A 6-component link diagram on $T^2$}\label{figure4}
\end{figure}

In order to overcome these difficulties, we make a little modification on the definition of region crossing change. Let $L$ be a link diagram on $\Sigma_g$ and $R$ a region of it. If $R$ appears $i$ times around a crossing point $C$, then taking region crossing change on $R$ will switch $C$ $i$ times. From now on when we say ``\emph{region crossing change}", it always refers to this modified version.

\begin{remark}
This is not the first time to study this kind of region crossing change in the literature. In \cite{Aha2012}, Kazushi Ahara and Masaaki Suzuki considered the \emph{single counting rule} and \emph{double counting rule}, which corresponds to the original region crossing change and our modified version (on $S^2$) respectively. For higher genus surface, this modified region crossing change was also studied by Dasbach and Russell in \cite{Das2018}, by using the Bollob\'{a}s-Riordan-Tutte polynomial. The different point is, in their paper they assumed that each region of a link diagram is a disk, i.e. the projection of the link diagram is cellularly embedded on $\Sigma_g$. Besides, they also assumed that each link diagram admits a checkerboard fashion coloring, which ensures the existence of Tait graph. Both assumptions are not needed in our paper.
\end{remark}

With this new version of region crossing change, for a given link diagram on $\Sigma_g$ one can similarly define the incidence matrix. The only different thing is, now if a region touches a crossing point $i$ times, then the corresponding entry equals $i$ (mod 2). For example, the new incidence matrix associated to the diagram in Figure \ref{figure3} has the form
\begin{center}
$M_K=\begin{pmatrix}
1&1&0&0&0\\
1&0&1&1&0\\
0&1&1&1&1\\
0&0&0&0&1\\
1&1&1&0&0\\
0&0&1&1&0\\
1&1&0&1&0
\end{pmatrix},$
\end{center}
since $R_7$ touches $C_5$ twice. It is routine to check that Proposition \ref{proposition1} still holds with respect to the modified region crossing change. In other words, in order to count the number of equivalence classes of link diagrams with the same projection on $\Sigma_g$, our main task is to figure out the $\mathbb{Z}_2$-rank of the incidence matrix.

\subsection{The graph $G_L$}
In order to have a better understanding of the effect of region crossing changes on a link diagram $L$ on $\Sigma_g$, it is convenient to introduce a new graph, which is denoted by $G_L$ in this paper.

Let $L$ be a link diagram on $\Sigma_g$ with crossing number $c$, then there are totally $2^c$ different link diagrams having the same projection as $L$. The graph $G_L$ consists of $2^c$ vertices, each one corresponds to one of these $2^c$ link diagrams. Now for each region, if two link diagrams are related by one region crossing change on this region then we add an edge connecting the corresponding two vertices. In particular, if a region touches every crossing point even times, then for each vertex of $G_L$ it adds a loop connecting this vertex to itself. As an example, consider the link projection on $T^2$ consisting of a meridian and a longitude. The corresponding graph has two components, each one consists of a vertex and an edge connecting the vertex to itself. As another example, Figure \ref{figure5} illustrates a trefoil knot diagram and its associated graph $G_K$.

\begin{figure}[h]
\centering
\includegraphics{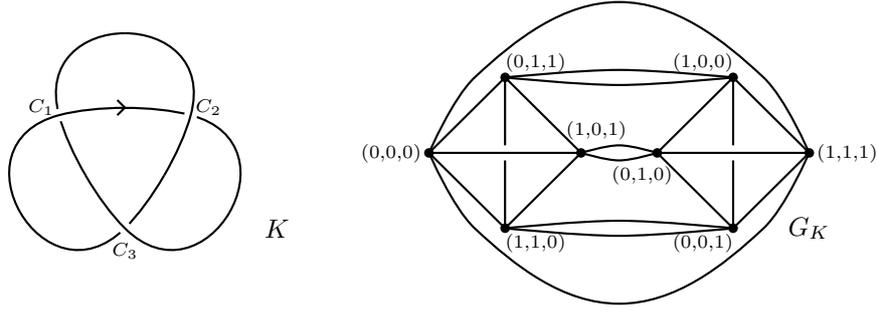}\\
\caption{A trefoil knot diagram $K$ and its associated graph $G_K$}\label{figure5}
\end{figure}

The graph $G_L$ contains a lot of information about region crossing changes. For example, the number of components of $G_L$ is equal to the number of equivalence classes of link diagrams with the same projection as $L$ under region crossing changes. In particular, each crossing point of $L$ is region crossing change admissible if and only if $G_L$ is connected. On the other hand, if a trail of $E_{G_L}$ forms a circuit then applying region crossing changes on the corresponding regions preserves every crossing point of $L$. The following proposition tells us that $G_L$ is highly symmetrical.

\begin{proposition}\label{proposition3}
Let $u$ and $v$ be two vertices of $G_L$, then there exists an isomorphism $h$ from $G_L$ to itself which brings $u$ to $v$.
\end{proposition}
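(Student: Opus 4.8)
The plan is to recognize $G_L$ as a Cayley graph of the additive group $\mathbb{Z}_2^c$ and then to exhibit the required isomorphism explicitly as a translation; this is just the standard vertex-transitivity of Cayley graphs, adapted to keep track of loops and parallel edges.

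First I would set up the correspondence between vertices of $G_L$ and vectors in $\mathbb{Z}_2^c$. As recalled in Section \ref{section2}, each of the $2^c$ link diagrams sharing the projection of $L$ is recorded by its crossing sign vector in $\mathbb{Z}_2^c$, and performing a region crossing change on the region $R_i$ has the effect of adding the row vector $R_i\in\mathbb{Z}_2^c$ to this crossing vector. Consequently, identifying each vertex with the corresponding element $x\in\mathbb{Z}_2^c$, the edges of $G_L$ are precisely the pairs $\{x,\,x+R_i\}$ as $x$ ranges over $\mathbb{Z}_2^c$ and $R_i$ over the rows of $M_L$; a loop at $x$ occurs exactly when $R_i=0$, i.e. when $R_i$ meets every crossing an even number of times. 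Thus $G_L$ is the (undirected, possibly with loops and parallel edges) Cayley graph of $(\mathbb{Z}_2^c,+)$ with connection multiset $S=\{R_1,\ldots,R_r\}$.

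Next, given two vertices $u,v\in\mathbb{Z}_2^c$, I would define $h\colon\mathbb{Z}_2^c\to\mathbb{Z}_2^c$ by $h(x)=x+u+v$. Since addition in $\mathbb{Z}_2^c$ is a bijection, $h$ is a bijection of the vertex set, and because $u+u=0$ we get $h(u)=v$ as required. To see that $h$ respects the edge structure, observe that for any region $R_i$ we have $h(x+R_i)=x+R_i+u+v=h(x)+R_i$; hence $h$ sends the edge $\{x,\,x+R_i\}$ arising from $R_i$ to the edge $\{h(x),\,h(x)+R_i\}$ arising from the \emph{same} region $R_i$. This shows $h$ preserves edges together with their multiplicities and carries loops to loops, and its inverse $x\mapsto x+u+v$ does likewise; therefore $h$ is an automorphism of $G_L$ carrying $u$ to $v$.

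There is no serious obstacle here, since the content is entirely the vertex-transitivity of a Cayley graph. The only points demanding care are the initial identification of $G_L$ with the translation action of $\mathbb{Z}_2^c$ and the verification that the full multiset of generators $S$ — allowing $R_i=0$ and repeated rows — is respected, so that loops and parallel edges are matched correctly rather than merely the underlying simple graph.
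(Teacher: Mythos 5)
Your proof is correct and takes essentially the same approach as the paper: both define the translation map $w\mapsto w+u+v$ (the paper writes it as $w+v-u$, which is the same over $\mathbb{Z}_2$) and observe that it is a bijection carrying $u$ to $v$ and preserving adjacency. Your Cayley-graph framing and explicit bookkeeping of loops, parallel edges, and the inverse map are just a more careful writeup of the identical argument.
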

\begin{proof}
Fix an orientation for each component of $L$, as mentioned in Section \ref{section2}, there is a one-to-one correspondence between the row vectors of $\mathbb{Z}_2^c$ and the vertex set of $G_L$. Furthermore, if two vertices are adjacent, then the difference of the corresponding row vectors is equal to the row vector corresponding to the edge connecting them. For the sake of simplicity, let us use the same symbol to denote a vertex in $G_L$, the corresponding link diagram and the associated row vector. Now the map $h: G_L\to G_L$ which sends $w\in V_{G_L}$ to $w+v-u$ is the desired automorphism of $G_L$. Notice that $w$ and $w'$ are adjacent if and only if $w+v-u$ and $w'+v-u$ are adjacent.
\end{proof}

\begin{corollary}\label{corollary2}
All connected components of $G_L$ are isomorphic.
\end{corollary}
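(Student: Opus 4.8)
The plan is to deduce this immediately from the vertex-transitivity established in Proposition \ref{proposition3}. The key observation is that any graph automorphism must permute the connected components of a graph: since an automorphism preserves adjacency it carries walks to walks, and therefore it restricts to an isomorphism between the component containing any vertex $w$ and the component containing its image $h(w)$.

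More precisely, first I would fix two arbitrary connected components $A$ and $B$ of $G_L$ and choose vertices $u\in V_A$ and $v\in V_B$. By Proposition \ref{proposition3} there exists an automorphism $h$ of $G_L$ with $h(u)=v$. Next I would check that $h$ carries $A$ onto $B$: any vertex $w$ of $A$ is joined to $u$ by a path, and applying $h$ edge-by-edge produces a path from $v=h(u)$ to $h(w)$, so $h(w)$ lies in the component of $v$, namely $B$; running the same argument with the automorphism $h^{-1}$ shows that every vertex of $B$ is the image of a vertex of $A$. Hence $h$ restricts to a bijection $V_A\to V_B$ that preserves and reflects adjacency, i.e.\ a graph isomorphism $A\to B$.

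Since $A$ and $B$ were chosen arbitrarily, this shows that all connected components of $G_L$ are isomorphic. I expect no real obstacle here, as the statement is a formal consequence of vertex-transitivity; the only point needing (routine) care is the verification that an automorphism respects the partition of $G_L$ into components, which reduces to the elementary fact that isomorphisms preserve the relation ``connected by a path.''
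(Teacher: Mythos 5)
Your proof is correct and follows the same route as the paper, which states Corollary \ref{corollary2} as an immediate consequence of the vertex-transitivity in Proposition \ref{proposition3}; you have simply filled in the routine verification that an automorphism carrying $u$ to $v$ restricts to an isomorphism between their components.
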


It is an interesting question to ask which kind of graph can be realized as $G_L$ for some link diagram $L$ on $\Sigma_g$. Or for the sake of simplicity, we can restrict ourselves to knot diagrams on the plane. Note that in this case each graph is connected, see Corollary \ref{corollary3} below.

\subsection{Homotopy invariance}
The main aim of this paper is to count the number of components of $G_L$. Especially, we want to know when $G_L$ is connected. Proposition \ref{proposition1} tells us that the key ingredient is rank$_{\mathbb{Z}_2}(M_L)$. We end this section with an important property of rank$_{\mathbb{Z}_2}(M_L)$.

\begin{theorem}\label{theorem3}
Let $L$ be a link diagram on $\Sigma_g$ with $r$ regions, then $r-\text{rank}_{\mathbb{Z}_2}(M_L)$ is invariant under Reidemeister moves.
\end{theorem}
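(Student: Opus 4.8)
The plan is to identify $r-\text{rank}_{\mathbb{Z}_2}(M_L)$ with a topological quantity and read off its invariance. Since $M_L$ is $r\times c$ over $\mathbb{Z}_2$, rank--nullity gives $r-\text{rank}_{\mathbb{Z}_2}(M_L)=\dim_{\mathbb{Z}_2}\ker(M_L^{T})$, and $\ker(M_L^{T})$ is exactly the space $N$ of region sets $S$ whose region crossing change has no net effect (every crossing is switched an even number of times). I would identify a region set $S$ with the $\mathbb{Z}_2$-$2$-chain $\sigma_S=\sum_{R\in S}R$ in the mod-$2$ cellular chain complex $C_2\xrightarrow{\ \partial_2\ }C_1\xrightarrow{\ \partial_1\ }C_0$ of the projection (crossings as $0$-cells, arcs as $1$-cells, regions as $2$-cells); the defining condition of $N$ then becomes a condition on $\sigma_S$ crossing by crossing, so the problem is reduced to understanding $N$ inside $C_2$.

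The key step is a local computation at a single crossing. Labelling the four quadrants cyclically with $\mathbb{Z}_2$-memberships $m_1,m_2,m_3,m_4$, the coefficient of each incident edge in $\partial_2\sigma_S$ is the sum of the memberships of the two quadrants it separates, and a short check shows that the evenness condition $m_1+m_2+m_3+m_4=0$ at that crossing is equivalent to the two edges of each passing strand receiving \emph{equal} coefficients in $\partial_2\sigma_S$. Consequently $S\in N$ if and only if $\partial_2\sigma_S$ is constant along every component of $L$, that is, $\partial_2\sigma_S$ lies in the subspace $Z=\langle z_{K_1},\dots,z_{K_n}\rangle\subseteq C_1$ spanned by the fundamental $1$-cycles $z_{K_i}=\sum_{e\subseteq K_i}e$ of the components.

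With this characterisation $N=\partial_2^{-1}(Z\cap\operatorname{im}\partial_2)$, so
\[
r-\text{rank}_{\mathbb{Z}_2}(M_L)=\dim N=\dim\ker\partial_2+\dim\bigl(Z\cap\operatorname{im}\partial_2\bigr).
\]
For a cellularly embedded diagram $\ker\partial_2=H_2(\Sigma_g;\mathbb{Z}_2)$ is $1$-dimensional, the $z_{K_i}$ have disjoint supports so $\dim Z=n$, and $\dim(Z\cap\operatorname{im}\partial_2)=\dim Z-\dim q(Z)$, where $q$ is the quotient map to $H_1(\Sigma_g;\mathbb{Z}_2)$ and $q(Z)=\langle[K_1],\dots,[K_n]\rangle$. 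This yields
\[
r-\text{rank}_{\mathbb{Z}_2}(M_L)=n+1-\dim_{\mathbb{Z}_2}\langle[K_1],\dots,[K_n]\rangle .
\]
The right-hand side depends only on the number of components and the $\mathbb{Z}_2$-homology classes they carry on $\Sigma_g$, both unchanged by Reidemeister moves, so invariance is immediate; specialising to $H_1(S^2)=0$ recovers the planar value $n+1$.

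The step I expect to be delicate is removing the cellularity hypothesis: when a region of $L$ is not a disc the three-term complex above no longer computes $H_\ast(\Sigma_g)$, so the identifications $\ker\partial_2\cong H_2$ and $Z_1/\operatorname{im}\partial_2\cong H_1$ must be justified by hand, or the non-disc regions subdivided while tracking the effect on $M_L$. If that turns out to be awkward, the fall-back is a direct check on the three Reidemeister moves by $\mathbb{Z}_2$ row/column operations: an $R1$ (resp.\ $R2$) move creates a monogon (resp.\ bigon) whose row is supported only on the new crossing columns and acts as a pivot, so that $\Delta r=\Delta(\text{rank})$ regardless of how the neighbouring regions reconnect. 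The genuinely hard case is $R3$, where $r$ and $c$ are unchanged but the span of the three local incidence columns genuinely changes; there one must argue that their interaction with the unchanged outer columns leaves $\text{rank}_{\mathbb{Z}_2}(M_L)$ fixed, which is exactly the content the homological viewpoint makes transparent.
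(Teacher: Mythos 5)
Your homological strategy is attractive --- in the cellular case it in fact proves the stronger formula $r-\text{rank}_{\mathbb{Z}_2}(M_L)=n+1-\text{rank}_{\mathbb{Z}_2}(N_L)$, which is Theorem \ref{theorem4} of the paper, and Reidemeister invariance would then follow at once and non-circularly (the paper's proof of Theorem \ref{theorem4} does not use Theorem \ref{theorem3}). But as written the argument has a genuine gap, and it is exactly the one you flag: cellularity. The theorem concerns arbitrary diagrams on $\Sigma_g$ (the paper stresses that, unlike Dasbach--Russell, no cellular embedding and no checkerboard colorability are assumed), and this is not a removable technicality, because Reidemeister moves do not preserve cellularity: an $\Omega_2$ reduction in which the two outer regions $R_4'$ and $R_5'$ coincide merges them with the bigon into an annular region --- already on $S^2$, reducing two overlapping circles to two disjoint circles does this. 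So one cannot even prove invariance for cellular diagrams alone; the class is not closed under the moves. For non-cellular diagrams your linear-algebra identity $\dim N=\dim\ker\partial_2+\dim\bigl(Z\cap\operatorname{im}\partial_2\bigr)$ still holds (the local quadrant computation is fine, and correctly accounts for regions meeting a crossing several times), but the two identifications you rely on, $\ker\partial_2\cong H_2(\Sigma_g;\mathbb{Z}_2)$ and $Z_1/\operatorname{im}\partial_2\cong H_1(\Sigma_g;\mathbb{Z}_2)$, fail and must be replaced by direct arguments: (i) $\dim\ker\partial_2=1$, which one can get by showing that around any crossing all four quadrant regions lie in one component of the region-adjacency graph, whence that graph is connected; and (ii) $\sum_{i\in I}z_{K_i}\in\operatorname{im}\partial_2$ if and only if $\sum_{i\in I}[K_i]=0\in H_1(\Sigma_g;\mathbb{Z}_2)$, whose ``if'' direction is precisely the 2-colorability criterion the paper isolates as Lemma \ref{lemma1} and proves by a smoothing induction. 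Items (i) and (ii) are the real content in the non-cellular setting (they are what Lemmas \ref{lemma1} and \ref{lemma2} do for the paper), and until they are supplied your main route is incomplete.

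Your fallback is the paper's actual proof, but you leave it open at the point you call ``genuinely hard,'' and there you are mistaken: $\Omega_3$ is the easy case. With the labelling of Figure \ref{figure8}, the central triangle satisfies $R_7=R_7'$ and each of the six surrounding regions satisfies $R_i'=R_i+R_7$; hence $M_{L'}$ is obtained from $M_L$ by adding one row to six others, so $r$ and $\text{rank}_{\mathbb{Z}_2}$ are both unchanged, with no analysis of the outer columns required. The genuine case analysis in the direct proof occurs instead in $\Omega_1$ and $\Omega_2$, where some of the local regions may coincide: for instance when $R_4'=R_5'$ one has $\Delta r=1$, not $2$, and ``the new rows act as pivots'' is no longer the whole story --- one must check that the rank also increases by exactly $1$, as the paper does case by case. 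I would recommend either completing the elementary check along these lines, or keeping the homological proof and adding (i) and (ii).
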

\begin{proof}
We check three Reidemeister moves one by one.
\begin{itemize}
\item The first Reidemeister move $\Omega_1$:
\begin{figure}[h]
\centering
\includegraphics{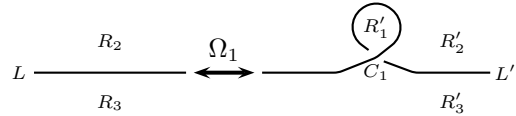}\\
\caption{Reidemeister move $\Omega_1$}\label{figure6}
\end{figure}

Suppose link diagrams $L$ and $L'$ are related by one $\Omega_1$. Denote the set of regions of $L$ and $L'$ by $\{R_2, R_3, \cdots, R_r\}$ and $\{R_1', R_2', R_3', \cdots, R_r'\}$ respectively, see Figure \ref{figure6}. Since $L'$ has one more region than $L$, it is sufficient to prove $\text{rank}_{\mathbb{Z}_2}(M_L)+1=\text{rank}_{\mathbb{Z}_2}(M_{L'})$. If $R_2$ and $R_3$ are different regions, then we have
\begin{center}
$\text{rank}_{\mathbb{Z}_2}(M_{L'})=\text{rank}_{\mathbb{Z}_2}\begin{pmatrix}
1&0&\cdots&0\\
0&&&\\
1&&&\\
0&&M_L&\\
\vdots&&&\\
0&&&
\end{pmatrix}=\text{rank}_{\mathbb{Z}_2}\begin{pmatrix}
1&0&\cdots&0\\
0&&&\\
0&&&\\
0&&M_L&\\
\vdots&&&\\
0&&&
\end{pmatrix}=\text{rank}_{\mathbb{Z}_2}(M_L)+1.$
\end{center}

If $R_2$ and $R_3$ are the same region, it suffices to combine the second row vector above with the third row vector. In this case, we still have $\text{rank}_{\mathbb{Z}_2}(M_{L'})=\text{rank}_{\mathbb{Z}_2}(M_L)+1$.

\item The second Reidemeister move $\Omega_2$:
\begin{figure}[h]
\centering
\includegraphics{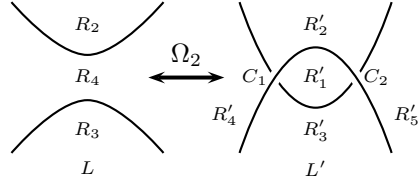}\\
\caption{Reidemeister move $\Omega_2$}\label{figure7}
\end{figure}

For the second Reidemeister move, see Figure \ref{figure7}, this time let us use $\{R_2, R_3, R_4\}$ and $\{R_1', R_2', R_3', R_4', R_5'\}$ to denote the regions of $L$ and $L'$ that involved in $\Omega_2$. We continue our discussion in two cases.
\begin{enumerate}
\item If $R_4'$ and $R_5'$ are distinct regions of $L'$, then $L'$ has two more regions than $L$, hence it suffices to prove that $\text{rank}_{\mathbb{Z}_2}(M_L)+2=\text{rank}_{\mathbb{Z}_2}(M_{L'})$. If $R_2', R_3', R_4', R_5'$ are mutually distinct regions, then we have
\begin{center}
$\text{rank}_{\mathbb{Z}_2}(M_{L'})=\text{rank}_{\mathbb{Z}_2}\begin{pmatrix}
1&1&0&\cdots&0\\
1&1&&r_2'&\\
1&1&&r_3'&\\
1&0&&r_4'&\\
0&1&&r_5'&\\
\vdots&\vdots&&&
\end{pmatrix}=\text{rank}_{\mathbb{Z}_2}\begin{pmatrix}
1&1&0&\cdots&0\\
0&0&&r_2'&\\
0&0&&r_3'&\\
0&0&&r_4'+r_5'&\\
0&1&&r_5'&\\
\vdots&\vdots&&&
\end{pmatrix}=\text{rank}_{\mathbb{Z}_2}\begin{pmatrix}
1&1&0&\cdots&0\\
0&1&&r_5'&\\
0&0&&&\\
0&0&&M_L&\\
0&0&&&\\
\vdots&\vdots&&&
\end{pmatrix}=\text{rank}_{\mathbb{Z}_2}(M_L)+2$.
\end{center}
If some of $\{R_2', R_3', R_4', R_5'\}$ are parts of the same region, it suffices to combine the corresponding row vectors into one row vector. For example, if $R_3'$ and $R_5'$ are actually the same region (hence $R_3$ and $R_4$ also denote the same region), let us still use $R_3'$ to denote this region. One can image this by adding a tube connecting the two regions $R_3'$ and $R_5'$ on the plane. In this case, it suffices to combine $R_4'$ with $R_3'$ and one computes
\begin{center}
$\text{rank}_{\mathbb{Z}_2}(M_{L'})=\text{rank}_{\mathbb{Z}_2}\begin{pmatrix}
1&1&0&\cdots&0\\
1&1&&r_2'&\\
1&0&&r_3'&\\
1&0&&r_4'&\\
\vdots&\vdots&&&
\end{pmatrix}=\text{rank}_{\mathbb{Z}_2}\begin{pmatrix}
1&1&0&\cdots&0\\
1&0&&r_3'&\\
0&0&&&\\
0&0&&M_L&\\
\vdots&\vdots&&&
\end{pmatrix}=\text{rank}_{\mathbb{Z}_2}(M_L)+2$.
\end{center}
Other cases can be verified in a similar manner.

\item If $R_4'$ and $R_5'$ are parts of the same region of $L'$ (still denoted by $R_4'$), in this case $L'$ has only one more region than $L$. Then we are needed to show that $\text{rank}_{\mathbb{Z}_2}(M_L)+1=\text{rank}_{\mathbb{Z}_2}(M_{L'})$. Similarly, let us first consider the simplest case that $R_2', R_3', R_4'$ are mutually distinct regions, then
\begin{center}
$\text{rank}_{\mathbb{Z}_2}(M_{L'})=\text{rank}_{\mathbb{Z}_2}\begin{pmatrix}
1&1&0&\cdots&0\\
1&1&&r_2'&\\
1&1&&r_3'&\\
1&1&&r_4'&\\
\vdots&\vdots&&&
\end{pmatrix}=\text{rank}_{\mathbb{Z}_2}\begin{pmatrix}
1&1&0&\cdots&0\\
0&0&&r_2'&\\
0&0&&r_3'&\\
0&0&&r_4'&\\
\vdots&\vdots&&&
\end{pmatrix}=\text{rank}_{\mathbb{Z}_2}\begin{pmatrix}
1&1&0&\cdots&0\\
0&0&&&\\
0&0&&M_L&\\
0&0&&&\\
\vdots&\vdots&&&
\end{pmatrix}=\text{rank}_{\mathbb{Z}_2}(M_L)+1$.
\end{center}
Other cases can be verified similarly as above. We omit the details here.
\end{enumerate}

\item The third Reidemeister move $\Omega_3$:
\begin{figure}[h]
\centering
\includegraphics{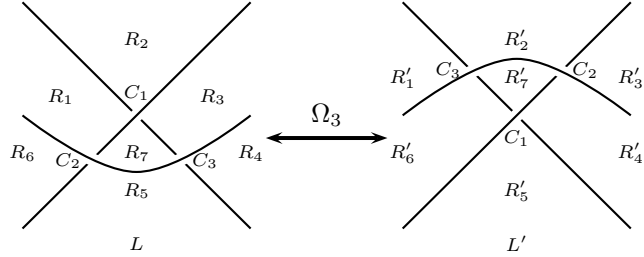}\\
\caption{Reidemeister move $\Omega_3$}\label{figure8}
\end{figure}

Assume $L$ and $L'$ are two link diagrams which are related by one $\Omega_3$ (see Figure \ref{figure8}), let us use $\{R_1, \cdots, R_r\}$ and $\{R_1', \cdots, R_r'\}$ to denote the regions of $L$ and $L'$ respectively. We only check the case that $R_1, \cdots, R_6$ (hence also $R_1', \cdots, R_6'$) are mutually different. Other cases can be proved in a similar manner.

Notice that $M_L$ and $M_{L'}$ have the following forms
\begin{center}
$M_L=\begin{pmatrix}
1&1&0&&r_1&\\
1&0&0&&r_2&\\
1&0&1&&r_3&\\
0&0&1&&r_4&\\
0&1&1&&r_5&\\
0&1&0&&r_6&\\
1&1&1&0&\cdots&0\\
\vdots&\vdots&\vdots&&M&
\end{pmatrix}$ and $M_{L'}=\begin{pmatrix}
0&0&1&&r_1&\\
0&1&1&&r_2&\\
0&1&0&&r_3&\\
1&1&0&&r_4&\\
1&0&0&&r_5&\\
1&0&1&&r_6&\\
1&1&1&0&\cdots&0\\
\vdots&\vdots&\vdots&&M&
\end{pmatrix}$.
\end{center}
Here each $r_i$ ($1\leq i\leq 6$) denotes a row vector of $\mathbb{Z}_2^{c-3}$, and $M$ denotes a $(r-7)\times(c-3)$ matrix. As before, we use $r$ and $c$ to denote the number of regions and the number of crossing respectively. Since there is a one-to-one correspondence between the regions of $L$ ($L'$) and row vectors of $M_L$ ($M_{L'}$), we will use the same notation to denote a region and the corresponding row vector. Notice that $R_i+R_7=R_i'$ ($1\leq i\leq 6$) and $R_7=R_7'$, it follows immediately that $\text{rank}_{\mathbb{Z}_2}(M_L)=\text{rank}_{\mathbb{Z}_2}(M_{L'})$. Together with the fact that $L$ and $L'$ have the same number of regions. The proof is finished.
\end{itemize}
\end{proof}

\begin{corollary}\label{corollary3}
With the modified region crossing change, for each $n$-component link diagram $L$ on the plane, we have rank$_{\mathbb{Z}_2}(M_L)=r-n-1$. In particular, each crossing point of a knot diagram on the plane is region crossing change admissible.
\end{corollary}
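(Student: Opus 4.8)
The plan is to deduce the statement for the modified region crossing change from the corresponding statement for the original one (Proposition \ref{proposition2}), using the Reidemeister invariance established in Theorem \ref{theorem3} as a bridge. The essential point is that, although the modified incidence matrix may differ from the original one, the two agree on any diagram that carries no nugatory crossing, and every link admits such a diagram.

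First I would record the elementary observation underlying the whole argument: on the plane (equivalently on $S^2$, where the regions and hence $M_L$ are literally the same) a region can be incident to a crossing with multiplicity greater than $1$ only when that crossing is nugatory, in which case the unique doubled region meets it exactly twice. Consequently, for a diagram $L_0$ with no nugatory crossing every region touches every crossing at most once, so the entry $i \bmod 2$ defining the modified matrix coincides with the indicator entry defining the original matrix. For such an $L_0$ the modified and original incidence matrices are then literally equal, whence $r_0 - \mathrm{rank}_{\mathbb{Z}_2}(M_{L_0}) = n + 1$ by Proposition \ref{proposition2}.

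Next I would pass from an arbitrary diagram to a nugatory-free one. Given any $n$-component link diagram $L$ on the plane, I delete its nugatory crossings one at a time; each deletion is an untwisting that leaves the link type unchanged and strictly lowers the crossing number, so the process terminates at a diagram $L_0$ of the same link with no nugatory crossing. By Reidemeister's theorem $L$ and $L_0$ are connected by a finite sequence of Reidemeister moves, and Theorem \ref{theorem3} then gives $r - \mathrm{rank}_{\mathbb{Z}_2}(M_L) = r_0 - \mathrm{rank}_{\mathbb{Z}_2}(M_{L_0}) = n + 1$, i.e. $\mathrm{rank}_{\mathbb{Z}_2}(M_L) = r - n - 1$. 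The final assertion is then immediate: for a knot diagram $n = 1$ and $r = c + 2$, so $\mathrm{rank}_{\mathbb{Z}_2}(M_L) = c$, and Proposition \ref{proposition1} translates this into the statement that every crossing is region crossing change admissible.

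I expect the only delicate step to be the reduction to a nugatory-free diagram \emph{within the plane}: one must check that a nugatory crossing can indeed be removed by an ambient isotopy (equivalently, a finite sequence of Reidemeister moves) without leaving the surface, and that this is precisely the situation in which the modified and original matrices coincide. A possible alternative, which would bypass Theorem \ref{theorem3} altogether, is to argue directly that replacing by $0$ the entries coming from doubled regions does not change the $\mathbb{Z}_2$-rank of the incidence matrix; I would compare the two routes and adopt whichever keeps the bookkeeping lightest.
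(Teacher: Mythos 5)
Your argument is correct, but it takes a genuinely different route from the paper's. The paper's own proof is a one-liner: since $M_L$ ignores all over/under information, $r-\mathrm{rank}_{\mathbb{Z}_2}(M_L)$ depends only on the projection, so the Reidemeister invariance of Theorem \ref{theorem3} upgrades to invariance under \emph{homotopy} of the projection (crossing changes are invisible to $M_L$, and flat Reidemeister moves lift to genuine ones); on the plane every $n$-component projection is homotopic to $n$ disjoint embedded circles, for which $c=0$, $r=n+1$ and $\mathrm{rank}_{\mathbb{Z}_2}(M_L)=0$, and the formula follows. You instead stop the reduction earlier, at a nugatory-free diagram, and then import Proposition \ref{proposition2} (hence the nontrivial computation of \cite{Che2012} for the original operation) through the observation that the modified and original incidence matrices agree when no crossing is nugatory. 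That observation is sound: on $S^2$ checkerboard colorability forbids two adjacent corners of a crossing from lying in one region, and both opposite pairs cannot simultaneously coincide (two closed curves on $S^2$ cannot meet transversally in exactly one point mod 2), so a repeated region occurs only as one opposite pair at a nugatory crossing, with multiplicity $2\equiv 0$. The step you flag as delicate is indeed harmless: untwisting a nugatory crossing preserves the link type, so Reidemeister's theorem supplies the required sequence of moves, and since each untwisting strictly lowers the crossing number the process terminates. The trade-off between the two routes: the paper's argument is self-contained given Theorem \ref{theorem3} and ends in a trivial computation, exploiting the full strength of homotopy (not merely isotopy) invariance; yours uses only the literal statement of Theorem \ref{theorem3} but leans on Proposition \ref{proposition2}, and in exchange it isolates exactly where the two versions of region crossing change differ on the plane --- only at nugatory crossings --- which is a clarifying observation in its own right.
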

\begin{proof}
Note that rank$_{\mathbb{Z}_2}(M_L)$ only depends on the projection of $L$, and each $n$-component link projection is homotopy equivalent to the disjoint union of $n$ simple closed curves.
\end{proof}

\begin{remark}
Modified region crossing change is an unknotting operation for knot diagrams on the plane is not a new result. Actually, the proof of the fact that the region crossing change with the double counting rule is an unknotting operation for knot diagrams on the plane is much simpler than the original one. The reader can obtain this result from Ayaka Shimizu's original proof \cite{Shi2014}. Later in \cite{Aha2012}, this result was reproved and extended by Ahara and Suzuki. Recently, Dasbach and Russell proved that the component number of $G_L$ for a link diagram $L$ on the plane is equal to the absolute value of the Tutte polynomial of the associated Tait graph evaluated at $(-1, -1)$, which is proved to be $2^{n-1}$ \cite{Das2018}. As a consequence, we have $c-\text{rank}_{\mathbb{Z}_2}(M_L)=n-1$. Together with the fact $r=c+2$, it also leads to the result rank$_{\mathbb{Z}_2}(M_L)=r-n-1$.
\end{remark}

\begin{proposition}\label{proposition4}
Let $K$ be a knot diagram on $T^2$, if the homotopy class $[K]=p[m]+q[l]$ where $[m], [l]$ denote the canonical generators of $\pi_1(T^2)$, then rank$_{\mathbb{Z}_2}(M_K)=r-2$ if $k$ is even, otherwise rank$_{\mathbb{Z}_2}(M_K)=r-1$. Here $k=gcd(p, q)$.
\end{proposition}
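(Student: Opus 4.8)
The plan is to recognize $f(K):=r-\mathrm{rank}_{\mathbb{Z}_2}(M_K)$ as a free homotopy invariant, simplify the homotopy class to the form $(k,0)$, and then evaluate $f$ on one explicit diagram. By Theorem~\ref{theorem3} the quantity $f(K)$ is unchanged under Reidemeister moves, and since both $r$ and $M_K$ depend only on the projection (shadow) of $K$, $f$ is in fact invariant under the flat versions of $\Omega_1,\Omega_2,\Omega_3$ performed on shadows. As these flat moves generate free homotopy of generic closed curves on a surface (they are the generic singularities of a homotopy), $f(K)$ depends only on the free homotopy class $[K]=p[m]+q[l]$; this is the same reasoning already used in Corollary~\ref{corollary3}.

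Next I would invoke the mapping class group: any orientation-preserving homeomorphism of $T^2$ carries a diagram to a combinatorially identical one (regions, crossings, and their incidences are merely relabelled), hence preserves both $r$ and $\mathrm{rank}_{\mathbb{Z}_2}(M_K)$, while acting on $H_1(T^2)$ as $SL(2,\mathbb{Z})$. Writing $p=kp'$, $q=kq'$ with $\gcd(p',q')=1$, the primitive vector $(p',q')$ extends to a basis, so some $A\in SL(2,\mathbb{Z})$ sends $(p,q)$ to $(k,0)$. Therefore $f(K)=f(k,0)$, and it remains to evaluate $f$ on a single convenient diagram in the class $k[m]$. For this I would take the closure, inside an annular neighborhood of the meridian, of the band generator $\sigma_1\sigma_2\cdots\sigma_{k-1}$ on $k$ strands. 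Its underlying permutation is a $k$-cycle, so this is a knot; it has $c=k-1$ crossings, and an Euler characteristic count ($V-E+\sum_F\chi(F)=0$ with $V=k-1$, $E=2(k-1)$) shows that $T^2$ minus the shadow consists of exactly one annular region $R_0$ together with $k-1$ disk ``band'' regions $B_1,\dots,B_{k-1}$, so $r=k$.

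A corner-by-corner inspection of the cascade of crossings then gives the incidence rows $R_0=e_1+e_{k-1}$ and $B_j=e_{j-1}+e_{j+1}$ (deleting the non-existent columns $v_0,v_k$), where $e_i$ is the unit vector of crossing $v_i$. Computing the left null space of this $k\times(k-1)$ matrix leads to the relations $\phi_0+\psi_2=0$, $\psi_{i-1}+\psi_{i+1}=0$ for $2\le i\le k-2$, and $\phi_0+\psi_{k-2}=0$. The middle relations force all odd-indexed $\psi$'s to agree and all even-indexed ones to agree, and whether the two boundary relations are compatible is governed exactly by the parity of $k-2$, i.e.\ of $k$: one finds nullity $2$ when $k$ is even and nullity $1$ when $k$ is odd, which is the asserted value of $r-\mathrm{rank}_{\mathbb{Z}_2}(M_K)$.

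I expect the main obstacle to be the explicit determination of the incidence rows of the standard diagram — that is, correctly carrying out the corner analysis to confirm that there is a single non-disk region and that each band meets its neighboring crossings in the claimed pattern — since the rank computation is then a routine mod-$2$ elimination. A useful conceptual check on the parity dichotomy, which I would include, is that the all-ones vector always lies in the left null space (each crossing has four corners, so every column sums to $0$), while a second, checkerboard, solution exists precisely when $T^2\setminus K$ admits a checkerboard coloring, i.e.\ when $[K]=0$ in $H_1(T^2;\mathbb{Z}_2)$, equivalently when $p$ and $q$ are both even, equivalently when $k$ is even. This explains conceptually why $k$ odd forces nullity $1$ and $k$ even yields nullity $2$.
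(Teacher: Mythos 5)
Your proposal is correct and follows essentially the same route as the paper's proof: both use Theorem~\ref{theorem3} to replace $K$ by the minimal diagram in its free homotopy class ($k$ parallel curves joined by the braid $\sigma_1\sigma_2\cdots\sigma_{k-1}$ --- you normalize to the class $k[m]$ by an $SL(2,\mathbb{Z})$ homeomorphism first, while the paper draws the same diagram directly in the class $p[m]+q[l]$), and both then compute the $\mathbb{Z}_2$-rank of the identical $k\times(k-1)$ incidence matrix, you via the left null space and the paper by exhibiting an explicit row basis. The one point you should add is the degenerate case $p=q=0$ (so $k=0$), which your braid construction does not produce and which the paper treats separately: a null-homotopic knot diagram is homotopic to a curve bounding a disk, where Corollary~\ref{corollary3} gives $\text{rank}_{\mathbb{Z}_2}(M_K)=r-2$, consistent with $k=0$ being even.
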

\begin{proof}
According to Theorem \ref{theorem3}, we can choose a minimal diagram $K'$ such that $[K']=[K]=p[m]+q[l]$. If $p=q=0$, then $k=0$ and $K'$ is a simple closed curve on $T^2$ which bounds a disk. In this case the result follows obviously. If gcd$(p, q)=k>0$, it is not difficult to observe that $c(K')=k-1$ and $r(K')=k$. See the left side of Figure \ref{figure9} for an example of $K'$, where $[K']=12[m]+8[l]$.

\begin{figure}[h]
\centering
\includegraphics{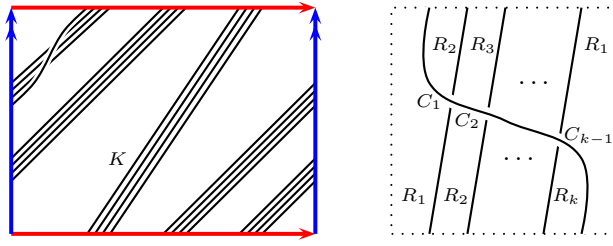}\\
\caption{A knot diagram on $T^2$}\label{figure9}
\end{figure}

Consider a small neighborhood of the crossing points $C_1, \cdots, C_{k-1}$, and denote all the regions by $R_1, \cdots, R_{k}$, see the right side of Figure \ref{figure9}. Then the incidence matrix $M_K$ has the form
\begin{center}
$M_K=\begin{pmatrix}
1&0&0&0&\cdots&0&1\\
0&1&0&0&\cdots&0&0\\
1&0&1&0&0&\cdots&0\\
0&1&0&1&0&\cdots&0\\
0&0&1&0&1&\cdots&0\\
\vdots&\vdots&&\ddots&&\ddots&\\
0&0&\cdots&0&1&0&1\\
0&0&\cdots&0&0&1&0\\
\end{pmatrix}$.
\end{center}
When $k$ is even, it is easy to observe that $R_2, \cdots, R_{k-1}$ form a basis for the row space. If $k$ is odd, then $R_2, \cdots, R_k$ form a basis for the row space. As before, here we also use $R_i$ to refer to the corresponding row vector of $M_K$. It follows that $r-\text{rank}_{\mathbb{Z}_2}(M_K)=k-\text{rank}_{\mathbb{Z}_2}(M_{K'})=k-(k-2)=2$ if $k$ is even; otherwise $r-\text{rank}_{\mathbb{Z}_2}(M_K)=k-\text{rank}_{\mathbb{Z}_2}(M_{K'})=k-(k-1)=1$. The proof is completed.
\end{proof}

\begin{corollary}\label{corollary4}
Let $K$ be a knot diagram on $T^2$ which satisfies $[K]=p[m]+q[l]\in\pi_1(T^2)$, then
\begin{itemize}
\item if gcd$(p, q)$ is even, $G_K$ is connected if and only if there exists a disk $D$ in $T^2$ such that $K\subset D$;
\item if gcd$(p, q)$ is odd, $G_K$ is connected if and only if there exists an annulus $A$ in $T^2$ such that $K\subset A$.
\end{itemize}
\end{corollary}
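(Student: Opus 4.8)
The plan is to convert connectivity of $G_K$ into a single numerical equality by means of Proposition~\ref{proposition1} and Proposition~\ref{proposition4}, and then to read that equality off from the topology of the complementary regions of the projection of $K$.

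For the reduction, recall that the number of components of $G_K$ is $2^{\,c-\text{rank}_{\mathbb{Z}_2}(M_K)}$ by Proposition~\ref{proposition1}, so $G_K$ is connected exactly when $\text{rank}_{\mathbb{Z}_2}(M_K)=c$. Proposition~\ref{proposition4} evaluates this rank as $r-2$ when $k$ is even and as $r-1$ when $k$ is odd. Writing $d:=r-c$, connectivity is therefore equivalent to $d=2$ in the even case and to $d=1$ in the odd case, and the corollary reduces to the two purely topological equivalences: $d=2$ if and only if $K$ lies in a disk, and $d=1$ if and only if $K$ lies in an annulus.

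The heart of the matter is an exact formula for $d$. I would write the projection (the image of $K$) as $P$, a connected $4$-valent graph on $T^2$ with $c$ vertices and $2c$ edges, and treat each complementary region $R_i$ as a compact subsurface of genus $g_i$ with $b_i$ boundary circles, so that $\chi(R_i)=2-2g_i-b_i$. The Euler relation $\chi(T^2)=V-E+\sum_i\chi(R_i)$ then gives $\sum_i\chi(R_i)=c$. Passing to a regular neighborhood $N(P)$, a connected surface of genus $g_N$ with $f$ boundary circles and $\chi(N(P))=-c$, yields $f=c+2-2g_N$; and since the regions cap exactly these circles, $\sum_i b_i=f$. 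Eliminating $f$ produces the identity
\[
d=r-c=1+\sum_i g_i-g_N.
\]
Because $N(P)$ and the regions are interior-disjoint subsurfaces of $T^2$, their genera satisfy $g_N+\sum_i g_i\le 1$ with at most one summand nonzero, so $(g_N,\sum_i g_i)\in\{(1,0),(0,0),(0,1)\}$ and hence $d\in\{0,1,2\}$.

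It remains to match the values $d=2$ and $d=1$ with the stated containments. If $d=2$ then $g_N=0$ and exactly one region is a once-punctured torus while the others are disks; its complement is a disk containing $P$, so $K$ lies in a disk, and conversely if $K\subset D$ the region carrying $T^2\setminus D$ is a once-punctured torus, forcing $d=2$. If $d=1$ then $g_N=0$, every $g_i=0$, and exactly one region $R_0$ is an annulus. This $R_0$ is necessarily essential: an inessential one would leave, on one side, a once-punctured torus tiled entirely by disks, which would force $N(P)$ to have positive genus and contradict $g_N=0$. An essential annular region has annular complement, so $K$ lies in an annulus; conversely, if $K\subset A$ then $g_N=0$, and (as $[K]\neq 0$ when $k$ is odd makes $A$ essential) no region can be a once-punctured torus, whence $\sum_i g_i=0$ and $d=1$. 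Substituting these equivalences into the reduction proves both cases. The step I expect to demand the most care is the region bookkeeping behind the displayed identity---the equalities $\sum_i b_i=f$ and $\chi(N(P))=-c$ together with the genus inequality $g_N+\sum_i g_i\le 1$---and the essentiality argument; by contrast the algebraic input from the two propositions is immediate.
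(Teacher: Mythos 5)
Your proposal is correct, and it reaches the statement by a route whose topological half genuinely differs from the paper's, so a comparison is worthwhile. Both arguments begin identically: Propositions \ref{proposition1} and \ref{proposition4} convert connectivity of $G_K$ into the numerical condition $r-c=2$ (even case) or $r-c=1$ (odd case), and both then exploit $\sum_i\chi(R_i)=c$. From there the paper proceeds by case analysis on which regions can have $\chi<1$: in the even case it excludes the two-annulus configuration and the pair-of-pants configuration by hand, using connectedness of $K$ (their removal would disconnect $T^2$ while their boundary lies on the connected projection); it handles the even-case converse by citing Corollary \ref{corollary3}; and in the odd case it derives essentiality of the annular region from the homotopy hypothesis $(p,q)\neq(0,0)$. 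Your identity $r-c=1+\sum_i g_i-g_N$, obtained by bringing the regular neighborhood $N(P)$ into the Euler-characteristic bookkeeping, together with the genus bound $g_N+\sum_i g_i\le 1$, systematizes all of this at once: the configurations the paper excludes by hand simply cannot occur, since they would force $g_N=-1$, and both converses follow by computing $g_N$ and $\sum_i g_i$ directly from $K\subset D$ or $K\subset A$ and running the same numerical equivalence backwards, with no appeal to Corollary \ref{corollary3}. What the paper's route buys is elementarity---only Euler characteristics of regions appear; what yours buys is uniformity and a reusable formula.

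Three points need tightening in a final write-up. First, the bound $g_N+\sum_i g_i\le 1$ is asserted but not proved; justify it (for instance, interior-disjoint subsurfaces contribute pairwise orthogonal symplectic subspaces to $H_1(T^2,\mathbb{Q})$, or cut $T^2$ along a maximal nonseparating curve system inside each piece). Second, in the case $d=2$ the claim that the remaining regions are disks follows from the count $\sum_i b_i=f=c+2-2g_N=r$, which forces every $b_i=1$; this one-line count should be made explicit. Third, in the odd converse you must exclude every region of positive genus, not only once-punctured tori: a genus-one region with several boundary circles is conceivable a priori, but the complement in $T^2$ of any genus-one subsurface is a disjoint union of disks, so such a region would put the connected projection inside a disk and force $[K]=0$, contradicting the oddness of $\gcd(p,q)$. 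With these repairs the argument is complete.
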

\begin{proof}
We divide the proof into two cases.

\begin{itemize}
\item If gcd$(p, q)$ is even, according to Proposition \ref{proposition4} we know that rank$_{\mathbb{Z}_2}(M_K)=r-2$. On the other hand, Proposition \ref{proposition1} tells us that $G_K$ is connected if and only if rank$_{\mathbb{Z}_2}(M_K)=c$, which follows that $r-2=c$. Denote all the regions by $R_1, \cdots, R_r$, then we have $c-2c+\sum\limits_{i=1}^r\chi(R_i)=\chi(T^2)=0$, where $\chi(R_i)$ and $\chi(T^2)$ denote the Euler characteristic of $R_i$ and $T^2$ respectively. Therefore $\sum\limits_{i=1}^r\chi(R_i)=c=r-2$. There are two possibilities:
\begin{enumerate}
\item Two regions, say $R_1$ and $R_2$, have Euler characteristic zero and all other regions have Euler characteristic one. Then $R_1$ and $R_2$ are both homeomorphic to an open annulus. Due to the Euler characteristic reason, it is not difficult to observe that $T^2\setminus(R_1\cup R_2)$ is disconnected. This contradicts with the fact that $K$ is connected.
\item One region, say $R_1$, has Euler characteristic minus one and all other regions have Euler characteristic one. Then $R_1$ is either homeomorphic to a 2-sphere with three disks removed or a torus with one disk removed. The first case also contradicts with the fact $K$ is connected. Therefore $R_1$ is homeomorphic to $T^2$ with a disk moved. This disk is the disk desired.
\end{enumerate}
Conversely, if $K$ is bounded in a disk, then it follows from Corollary \ref{corollary3} that each crossing point is region crossing change admissible. Hence $G_K$ is connected.
\item If gcd$(p, q)$ is odd, in this case $G_K$ is connected if and only if $c=r-1$. With the same notations as above, now we have $\sum\limits_{i=1}^r\chi(R_i)=c=r-1$. The only possibility is one region $R_1$ has Euler characteristic zero and all other regions have Euler characteristic one. Hence $R_1$ is homeomorphic to an open annulus and all other regions are homeomorphic to open disks. Note that the core of $A$ represents a nontrivial homotopy element of $\pi_1(T^2)$, otherwise $p=q=0$, which contradicts with the assumption that gcd$(p, q)$ is an odd integer. Now a small neighborhood of the complement of $R_1$ provides the desired annulus $A$.

Conversely, assume there exists an annulus $A\subset T^2$ such that the knot diagram $K\subset A$. According to the assumption that gcd$(p, q)$ is an odd integer, it suffices to show the equation $c=r-1$ holds. Notice that $[K]\neq0\in\pi_1(A)$ and $T^2\setminus A$ is connected, since gcd$(p, q)$ is odd. Denote all the regions of $T^2\setminus K$ by $R_1, R_2, \cdots, R_r$ such that $R_i\subset A$ ($2\leq i\leq r$). It is not difficult to observe that each $R_i$ ($2\leq i\leq r$) is homeomorphic to an open disk. Then we have
\begin{center}
$0=c-2c+\sum\limits_{i=1}^r\chi(R_i)=-c+0+\sum\limits_{i=2}^r\chi(R_i)=r-1-c$,
\end{center}
then we obtain the result desired.
\end{itemize}
\end{proof}

\section{The main result}\label{section4}
Theorem \ref{theorem3} is extremely useful for calculating rank$_{\mathbb{Z}_2}(M_K)$ when the genus of the surface is small or the homotopy class $[K]\in\pi_1(\Sigma_g)$ is simple. However, if a minimal link diagram $L'$ with $[L']=[L]\in\pi_1(\Sigma_g)$ is still very complicated, it is not an easy job to read rank$_{\mathbb{Z}_2}(M_L)$ directly from $L'$. The main aim of this section is to provide a method to calculate rank$_{\mathbb{Z}_2}(M_L)$ directly from the link diagram $L$. In particular, Theorem \ref{theorem3} tells us that $r-\text{rank}_{\mathbb{Z}_2}(M_L)$ is a homotopy invariant. We will find from the following theorem that actually it only depends on the homology class of $L$.

Let $L=K_1\cup\cdots\cup K_n$ be a link diagram on $\Sigma_g$ and $\{\alpha_1, \cdots, \alpha_{2g}\}$ a canonical basis of $H_1(\Sigma_g, \mathbb{Z}_2)$. Assume each homology class $[K_i]\in H_1(\Sigma_g, \mathbb{Z}_2)$ can be written as $[K_i]=\sum\limits_{j=1}^{2g}b_{ij}\alpha_j$, then we have a new matrix $N_L=(b_{ij})_{n\times 2g}$. The following theorem extends the result of Corollary \ref{corollary3} from $S^2$ to $\Sigma_g$.

\begin{theorem}\label{theorem4}
For a given link diagram $L=K_1\cup\cdots\cup K_n$ on $\Sigma_g$, we have $\text{rank}_{\mathbb{Z}_2}(M_L)=r-n-1+\text{rank}_{\mathbb{Z}_2}(N_L)$. Here $r$ denotes the number of regions.
\end{theorem}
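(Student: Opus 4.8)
The plan is to compute the left kernel of $M_L$ over $\mathbb{Z}_2$, whose dimension is exactly $r-\text{rank}_{\mathbb{Z}_2}(M_L)$, and to identify it with a homological quantity. A vector in this kernel is the indicator function $\mathbf{1}_S$ of a set $S$ of regions such that, at every crossing, the number of corners lying in regions of $S$ is even. First I would read this condition locally. Viewing the projection of $L$ as a $4$-valent graph embedded in $\Sigma_g$ gives a mod-$2$ chain complex $C_2\xrightarrow{\partial_2}C_1\xrightarrow{\partial_1}C_0$ with $C_2=\mathbb{Z}_2^{\{\text{regions}\}}$, $C_1=\mathbb{Z}_2^{\{\text{arcs}\}}$, $C_0=\mathbb{Z}_2^{\{\text{crossings}\}}$. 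A case check on the four corners at a crossing shows that the parity of shaded corners there is odd precisely when the boundary $1$-cycle $\partial_2(\sum_{R\in S}R)$ uses two \emph{adjacent} of the four arcs, and even when it uses none, two \emph{opposite} arcs, or all four. Since opposite arcs belong to the same strand, this says $\mathbf{1}_S$ lies in the left kernel if and only if $\partial_2(\sum_{R\in S}R)$ runs straight along the strands at every crossing, i.e.\ is a union of whole components of $L$. Writing $\Lambda=\operatorname{span}_{\mathbb{Z}_2}\{[K_1],\dots,[K_n]\}\subseteq C_1$ for the subspace spanned by the component $1$-cycles, this identifies the left kernel with $\partial_2^{-1}(\Lambda)$.

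Next I would apply rank--nullity to $\partial_2$ restricted to this preimage:
\[
\dim \partial_2^{-1}(\Lambda)=\dim\ker\partial_2+\dim\!\big(\operatorname{im}\partial_2\cap\Lambda\big).
\]
For the first term, $\ker\partial_2$ consists of region-sets with empty boundary; because $\Sigma_g$ is connected, any two regions are joined by a chain of arc-adjacencies, so the only such sets are $\varnothing$ and everything, giving $\dim\ker\partial_2=1$. For the second term, the key point is that, for $\lambda\in\Lambda$, one has $\lambda\in\operatorname{im}\partial_2$ if and only if $\lambda$ is null-homologous in $H_1(\Sigma_g;\mathbb{Z}_2)$: boundaries of region-sums are clearly null-homologous, and conversely a null-homologous cycle supported on $L$ bounds a union of regions (refine to a triangulation; a filling $2$-chain must be constant on each region, since the arcs interior to a region carry no part of $\lambda$). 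Hence $\operatorname{im}\partial_2\cap\Lambda$ is the kernel of the map $\Lambda\to H_1(\Sigma_g;\mathbb{Z}_2)$ sending each component cycle to its homology class, whose image has dimension $\text{rank}_{\mathbb{Z}_2}(N_L)$ by the definition of $N_L$. As the component $1$-cycles have pairwise disjoint arc-supports they are independent, so $\dim\Lambda=n$ and $\dim(\operatorname{im}\partial_2\cap\Lambda)=n-\text{rank}_{\mathbb{Z}_2}(N_L)$. Adding the two terms gives $r-\text{rank}_{\mathbb{Z}_2}(M_L)=1+n-\text{rank}_{\mathbb{Z}_2}(N_L)$, which is the claim.

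The main obstacle I anticipate is precisely that the paper does \emph{not} assume the projection is cellularly embedded, so the regions need not be disks and the graph complex above need not compute $H_*(\Sigma_g)$. The delicate step is therefore the ``null-homologous $\Rightarrow$ bounds a union of regions'' lemma, which must be argued on the surface itself (via a refining triangulation) rather than by quoting that faces generate $H_2$ of a CW structure; this is also what lets $\dim\ker\partial_2=1$ survive for non-disk regions. A minor technical point is a component with no crossings, which carries no arcs and hence no class in $C_1$; I would remove this case at the outset by a first Reidemeister move, which changes neither side of the identity thanks to Theorem \ref{theorem3} and the homotopy invariance of the classes $[K_i]$. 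Everything else---the local corner count and the two dimension computations---is routine.
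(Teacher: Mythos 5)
Your proposal is correct, and it reaches the theorem by a route that is genuinely different in execution from the paper's, even though the two underlying correspondences are the same. The paper works through Lemma \ref{lemma1} (a sub-link is $2$-colorable iff its total class in $H_1(\Sigma_g,\mathbb{Z}_2)$ vanishes) and Lemma \ref{lemma2} (a vanishing sum of region rows corresponds to a $2$-colorable sub-link, proved by propagating colors along each component), and then proves the two inequalities $r-\text{rank}_{\mathbb{Z}_2}(M_L)\geq n+1-\text{rank}_{\mathbb{Z}_2}(N_L)$ and $r-\text{rank}_{\mathbb{Z}_2}(M_L)\leq n+1-\text{rank}_{\mathbb{Z}_2}(N_L)$ separately; each half needs a delicate ad hoc independence argument (culminating in the multiset-parity argument at the end of the second half). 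Your chain-complex formulation contains the same two ingredients --- your local corner count identifying the left kernel of $M_L$ with $\partial_2^{-1}(\Lambda)$ is the content of Lemma \ref{lemma2}, and your ``null-homologous iff bounds a union of regions'' step is the content of Lemma \ref{lemma1} --- but you then replace the paper's independence counting by the single rank--nullity identity $\dim\partial_2^{-1}(\Lambda)=\dim\ker\partial_2+\dim\left(\operatorname{im}\partial_2\cap\Lambda\right)$, with $\dim\ker\partial_2=1$ coming from connectedness of $\Sigma_g$ and $\dim\Lambda=n$ from the disjointness of the components' arc-supports. This is where your approach gains: the ``no relation can be derived from the others'' arguments, which are the trickiest and most error-prone part of the paper's proof, are absorbed into standard linear algebra. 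You also correctly isolate the two points that genuinely require care and that the paper treats lightly: the triangulation-refinement argument showing that a null-homologous cycle supported on $L$ bounds a union of (possibly non-disk) regions --- the paper's proof of Lemma \ref{lemma1} asserts the corresponding fact in a single sentence --- and the crossing-free components, which carry no arcs and would otherwise break $\dim\Lambda=n$; your reduction via $\Omega_1$ and Theorem \ref{theorem3} disposes of them cleanly.
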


Before proving Theorem \ref{theorem4}, we need two lemmas. For a given link diagram $L$ on $\Sigma_g$, if all the regions of $L$ admits a checkerboard fashion coloring, i.e. each region is colored white or black such that locally any two adjacent regions receive distinct colors, then we say $L$ is \emph{2-colorable}. For example, a meridian on $T^2$ is not 2-colorable, since the unique region is adjacent to itself.

\begin{lemma}\label{lemma1}
A link diagram $L=K_1\cup\cdots\cup K_n$ on $\Sigma_g$ is 2-colorable if and only if $\sum\limits_{i=1}^n[K_i]=0\in H_1(\Sigma_g, \mathbb{Z}_2)$.
\end{lemma}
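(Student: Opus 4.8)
The plan is to reformulate $2$-colorability in terms of the mod-$2$ intersection pairing on $\Sigma_g$. Write $[L]=\sum_{i=1}^n[K_i]\in H_1(\Sigma_g,\mathbb{Z}_2)$ for the total class of the diagram, and recall that the intersection form $H_1(\Sigma_g,\mathbb{Z}_2)\times H_1(\Sigma_g,\mathbb{Z}_2)\to\mathbb{Z}_2$ is nondegenerate (Poincar\'e duality), so that $[L]=0$ if and only if $\gamma\cdot[L]=0$ for every $\gamma\in H_1(\Sigma_g,\mathbb{Z}_2)$. The geometric fact I would use is that for a loop $\gamma$ in general position with respect to $L$ (transverse to the edges and missing the crossings), the number of intersection points of $\gamma$ with $L$, counted mod $2$, equals $\gamma\cdot[L]$, and therefore depends only on the homology class of $\gamma$. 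Thus the statement to be proved is equivalent to: $L$ is $2$-colorable if and only if every loop in $\Sigma_g$ meets $L$ in an even number of points.

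For the forward direction I would assume a $2$-coloring $c$ is given and take an arbitrary loop $\gamma$ in general position. Each time $\gamma$ crosses an edge of $L$ it passes between two regions of opposite color, so the color flips; since $\gamma$ is a loop, the total number of flips is even, whence $\gamma$ meets $L$ evenly and $\gamma\cdot[L]=0$. As $\gamma$ is arbitrary and the pairing is nondegenerate, this gives $[L]=0$, i.e. $\sum_{i=1}^n[K_i]=0$. For the converse, assuming $[L]=0$, I would construct the coloring directly: fix a base region $R_0$, declare it white, and color any region $R_i$ by the parity of the number of times a path from (a point of) $R_0$ to (a point of) $R_i$, transverse to $L$ and missing the crossings, meets $L$. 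Two such paths differ by a loop whose mod-$2$ intersection number with $L$ is $\gamma\cdot[L]=0$, so the parity is independent of the path, and the coloring is well defined on all regions since $\Sigma_g$ is connected. Crossing a single edge changes the parity by exactly one, so any two regions adjacent along an edge receive opposite colors, which is precisely the checkerboard condition.

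The main obstacle is the well-definedness in the converse, which is exactly where the hypothesis $[L]=0$ enters and which I would treat with care. In particular I would note that if some edge had the \emph{same} region on both sides, then a short loop passing through that region and across the edge once would meet $L$ an odd number of times, contradicting $[L]=0$; so under the hypothesis every edge borders two distinct regions and the constructed $c$ is a genuine $2$-coloring. A further point worth emphasizing is that the whole argument uses only path-connectedness of the open regions (automatic for connected open subsets of a surface) together with the homotopy invariance of the mod-$2$ intersection number, so that \emph{no} assumption that the regions are disks, nor any a priori checkerboard structure, is needed; this is consistent with the generality claimed earlier in the paper. (An alternative and essentially equivalent formulation would be to observe that a $2$-coloring exhibits $L$ as the mod-$2$ boundary of the $2$-chain formed by the black regions, and conversely; but the intersection-number approach is cleaner precisely because it avoids assuming a cellular embedding when realizing $L$ as such a boundary at the crossings.)
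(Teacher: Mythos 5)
Your proof is correct, and it takes a genuinely different route from the paper's. The paper works at the chain level: for the direction $\sum_{i=1}^n[K_i]=0\Rightarrow$ 2-colorable, it asserts that a null-homologous mod-2 cycle supported on $L$ bounds a union of regions and colors that union white; for the reverse direction it smooths crossings one at a time (preserving the component count, the total homology class, and 2-colorability) until the diagram is a disjoint union of simple closed curves, where the white regions visibly bound the link. You instead run both directions through the mod-2 intersection pairing: a 2-coloring forces every transverse loop to meet $L$ an even number of times, so nondegeneracy of the pairing (Poincar\'e duality) gives $[L]=0$; conversely, $[L]=0$ makes the parity of intersection along paths from a base region a well-defined coloring, and your observation that an arc with the same region on both sides would produce a loop meeting $L$ exactly once correctly disposes of the degenerate case. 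Your version buys a cleaner argument: no induction on smoothings, no appeal to the (true, but not justified in detail in the paper) fact that a mod-2 2-chain bounding $L$ must be constant on regions and hence a union of them, and an explicit treatment of self-adjacent arcs, all without assuming regions are disks or that the projection is cellularly embedded --- matching the generality the paper claims. What the paper's formulation buys is coherence with what follows: its proof keeps the viewpoint that 2-colorings correspond to sets of regions summing to zero, which is exactly the interface exploited in Lemma \ref{lemma2} and in the proof of Theorem \ref{theorem4}, so the paper's argument doubles as setup for the incidence-matrix machinery, whereas your duality argument, while tidier as a standalone proof, leaves that correspondence to be established separately.
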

\begin{proof}
Denote all the regions of $L$ by $R_1, \cdots, R_r$. If $\sum\limits_{i=1}^n[K_i]=0\in H_1(\Sigma_g, \mathbb{Z}_2)$, then there exist some regions, say $R_1, \cdots, R_s$, such that $\partial R_1\cup\cdots\cup\partial R_s=K_1\cup\cdots\cup K_n$. Note that here we count with $\mathbb{Z}_2$-coefficient. However, locally one arc of $L$ is adjacent to at most two regions, it follows that if two regions are locally adjacent then only one of them belongs to the set $\{R_1, \cdots, R_s\}$. Coloring regions $R_1, \cdots, R_s$ white and all other regions black yields the 2-coloring desired.

Conversely, if $L$ is 2-colorable, let us use $R_1, \cdots, R_s$ to denote all the regions colored white. If $L$ has no crossing point, i.e. each $K_i$ is a simple closed curve and $K_i\cap K_j=\emptyset$ ($1\leq i<j\leq n$), then $\partial R_1\cup\cdots\cup\partial R_s=K_1\cup\cdots\cup K_n$. With a suitable choice of orientation for each $K_i$, we have $\sum\limits_{i=1}^n[K_i]=0\in H_1(\Sigma_g, \mathbb{Z})$, which follows that $\sum\limits_{i=1}^n[K_i]=0\in H_1(\Sigma_g, \mathbb{Z}_2)$. If the crossing number of $L$ is positive, choose a crossing point and smooth it such that the number of components of $L$ is preserved. Denote the new link diagram by $L'=K_1'\cup\cdots\cup K_n'$, then we have $\sum\limits_{i=1}^n[K_i]=\sum\limits_{i=1}^n[K_i']\in H_1(\Sigma_g, \mathbb{Z}_2)$. Notice that $L$ is 2-colorable if and only if $L'$ is 2-colorable. Continue this process until there is no crossing points, according to the discussion above one concludes that $\sum\limits_{i=1}^n[K_i]=0\in H_1(\Sigma_g, \mathbb{Z}_2)$.
\end{proof}

Recall that there is a one-to-one correspondence between regions (crossing points) and row (column) vectors of the incidence matrix. Similar as before, we will use the same symbol $R_i$ ($C_i$) to refer to a region (crossing point) and the corresponding row (column) vector.

\begin{lemma}\label{lemma2}
Let $L=K_1\cup\cdots\cup K_n$ be a link diagram on $\Sigma_g$ and $\{R_1, \cdots, R_r\}$ the set of all regions. If $\sum\limits_{i=1}^sR_i=0$ ($s<r$), then there exists a sub-link $L'$ such that $L'$ is 2-colorable. Conversely, if a sub-link of $L$ is 2-colorable then there exist some regions such that the sum of them is equal to the zero vector.
\end{lemma}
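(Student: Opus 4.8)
The plan is to convert the algebraic relation into a statement about a black/white coloring and then read a sub-link off the resulting boundary. Recall that the entry of $R_i$ in the column $C_j$ records, modulo $2$, how many of the four local corners at $C_j$ lie in the region $R_i$. Hence, coloring $R_1, \dots, R_s$ \emph{white} and every other region \emph{black}, the relation $\sum_{i=1}^{s} R_i = 0$ says precisely that every crossing of $L$ has an \emph{even} number of white corners. I take $s \geq 1$; together with $s < r$ this makes the white set $W = R_1 \cup \cdots \cup R_s$ a proper nonempty subset of the connected surface $\Sigma_g$, so its frontier is a nonempty union of arcs of $L$.

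For the first implication I would call an edge of $L$ \emph{active} when the two regions touching it receive different colors, so that the active edges are exactly the arcs on the frontier of $W$. The crux is a local check at a single crossing: writing the four corner colors cyclically and using that an even number of them is white, one verifies in every case that the two edges of a single strand are either both active or both inactive. Hence the active/inactive status cannot change as one runs along a component $K_i$, so it is constant on each component and $L' := \bigcup\{K_i : K_i \text{ is active}\}$ is a sub-link; it is nonempty because the frontier of $W$ is. Finally, across each inactive edge the two sides share a color, so deleting the inactive components merges only regions of equal color and the coloring descends to a well-defined coloring of the regions of $L'$; across each edge of $L'$ the two sides differ. This is a checkerboard coloring, so $L'$ is $2$-colorable.

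Conversely, given a $2$-colorable nonempty sub-link $L'$, I would fix a checkerboard coloring of its regions and pull it back to $L$ by coloring each region of $L$ with the color of the region of $L'$ containing it; let $S$ be the set of white regions of $L$. Since the color changes across an arc of $L$ exactly when that arc belongs to $L'$, a short case analysis at each crossing of $L$ — according to whether $0$, $1$, or $2$ of its two strands lie in $L'$ — forces the four corner colors into the cyclic patterns ``all equal'', ``two adjacent equal and the opposite two adjacent equal'', or ``alternating'', each of which has an even number of white corners. Therefore $\sum_{i \in S} R_i = 0$, and because both colors occur (the arcs of $L'$ separate differently colored regions) the set $S$ is a proper nonempty collection of regions, matching the hypothesis $s<r$ of the direct implication.

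I expect the main difficulty to sit in the forward local analysis, namely in showing cleanly that the active edges close up into \emph{complete} components rather than terminating at a crossing; the even-corner hypothesis is exactly what forbids a strand from having one active and one inactive edge, and one sees it fails without evenness (a pattern with three white corners leaves a strand with mismatched edges). Some care is also needed for the configurations peculiar to $\Sigma_g$, where one region may occupy two, three, or all four corners at a crossing (compare Figure \ref{figure4}) and where a single component may pass through a crossing as both strands; in each such case the corner colors are forced to coincide, and one must confirm that the ``two edges of a strand agree'' rule still holds. Once this local statement is secured, passing between the coloring and the sub-link in both directions is routine bookkeeping.
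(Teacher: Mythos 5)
Your proposal is correct and takes essentially the same approach as the paper: your local parity check that the two edges of a strand at a crossing are both active or both inactive is exactly the paper's argument of walking along each $K_i$ and propagating the ``same color/different color'' property through crossings, and your converse (pulling the checkerboard coloring of $L'$ back to the regions of $L$) matches the paper's observation that each region of $L'$ is a union of regions of $L$. Your explicit treatment of nonemptiness of the active sub-link and of the degenerate corner configurations on $\Sigma_g$ is slightly more careful than the paper's write-up, but it is the same proof.
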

\begin{proof}
First notice that if $\sum\limits_{i=1}^sR_i=0$ then $\sum\limits_{i=s+1}^rR_i=0$, since $\sum\limits_{i=1}^rR_i=0$. Dye the regions $R_1, \cdots, R_s$ white and dye $R_{s+1}, \cdots, R_r$ black. A key observation is, if two regions adjacent to the same arc of $K_i$ have different colors, then along $K_i$ ($1\leq i\leq n$), any pair of regions adjacent to the same arc of $K_i$ also have different colors. Consequently, if two regions on the two sides of $K_i$ ($1\leq i\leq n$) have the same color then any pair of regions on the two sides of $K_i$ also have the same color. In order to see this, consider two regions, say $R_1$ and $R_2$, are adjacent to the same arc of $K_i$. According to our coloring rule above, $R_1$ and $R_2$ are both colored white. Walking along $K_i$, when we meet the first crossing point the two new regions must have the same color, since $\sum\limits_{i=1}^sR_i=0$. Repeating this deduction until we come back to our beginning point, one observes that if two regions located in the two sides of the same arc of $K_i$ respectively, then  just like $R_1$ and $R_2$, they also have the same color. Figure \ref{figure10} provides a diagrammatic sketch of this deduction.

\begin{figure}[h]
\centering
\includegraphics{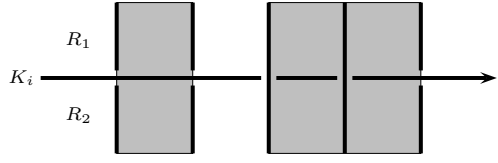}\\
\caption{Coloring regions along $K_i$}\label{figure10}
\end{figure}

Without loss of generality, we assume $K_1, \cdots, K_t$ are those knot components which satisfy that any two regions adjacent to the same arc of some $K_i$ ($1\leq i\leq t$) have different colors. Then our coloring rule provides a checkerboard fashion coloring for the sub-link $K_1\cup\cdots\cup K_t$.

Conversely, if a sub-link $L'$ is 2-colorable, notice that a region of $L'$ is a disjoint union of some regions of $L$, then the sum of all white regions equals the zero vector. The proof is finished.
\end{proof}

Now we turn to the proof of Theorem \ref{theorem4}.
\begin{proof}
The proof consists of two parts:
\begin{itemize}
\item $r-\text{rank}_{\mathbb{Z}_2}(M_L)\geq n+1-\text{rank}_{\mathbb{Z}_2}(N_L)$.

Assume $n-\text{rank}_{\mathbb{Z}_2}(N_L)=k$ and $\{[K_{k+1}], \cdots, [K_{n}]\}$ is a basis for the row space of $N_L$. Here $[K_i]$ is not only referred to the homology class of $K_i$ in $H_1(\Sigma_g, \mathbb{Z}_2)$, but also the corresponding row vector in $N_L$. Then each $[K_i]$ ($1\leq i\leq k$) can be written as the sum of some elements in $\{[K_{k+1}], \cdots, [K_n]\}$. According to Lemma \ref{lemma1}, now we have $k$ different 2-colorable sub-links of $L$. Lemma \ref{lemma2} tells us that each one of these 2-colorable sub-links gives rise to some regions whose sum is equal to zero. Let us write them down as below
\begin{center}
$\sum\limits_{R_i\in A_1}R_i=0, \cdots, \sum\limits_{R_i\in A_k}R_i=0$,
\end{center}
here each $A_i$ ($1\leq i\leq k$) is a subset of $\{R_1, \cdots, R_r\}$. Together with $\sum\limits_{i=1}^rR_i=0$, now we have $k+1$ linearly dependent sets of row vectors. In order to prove $r-\text{rank}_{\mathbb{Z}_2}(M_L)\geq k+1$, it suffices to show that the coefficients of these $k+1$ equalities are linearly independent, i.e. any equality of these can not be derived from the rest $k$ equalities.

Consider an arc of $K_1$, denote the two regions adjacent to it by $R_1$ and $R_2$. According to the method of choosing regions discussed in Lemma \ref{lemma2}, only one of $R_1, R_2$ belongs to $A_1$, and either $R_1\in A_i, R_2\in A_i$ or $R_1\notin A_i, R_2\notin A_i$ if $i\geq 2$. It follows that any equality of $\sum\limits_{i\in A_1}R_i=0, \cdots, \sum\limits_{i\in A_k}R_i=0$ can not be obtained from the others. Moreover, we claim the last equality $\sum\limits_{i=1}^rR_i=0$ also can not be derived from the first $k$ equalities. For example, assume $R_1\in A_1$ but $R_2\notin A_1$, since either $R_1\in A_i, R_2\in A_i$ or $R_1\notin A_i, R_2\notin A_i$ for all $2\leq i\leq k$, one deduces that the first equality $\sum\limits_{R_i\in A_1}R_i=0$ can not be used to obtain the equality $\sum\limits_{i=1}^rR_i=0$. Analogously, one can prove a similar result for any $2\leq i\leq k$. Therefore we conclude that $r-\text{rank}_{\mathbb{Z}_2}(M_L)\geq k+1=n+1-\text{rank}_{\mathbb{Z}_2}(N_L)$.

\item $r-\text{rank}_{\mathbb{Z}_2}(M_L)\leq n+1-\text{rank}_{\mathbb{Z}_2}(N_L)$.

This time we assume $r-\text{rank}_{\mathbb{Z}_2}(M_L)=k$ and denote a basis for the row space of $M_L$ by $R_{k+1}, \cdots, R_r$. Now each $R_i$ ($1\leq i\leq k$) can be written as a linear combination of some elements in $\{R_{k+1}, \cdots, R_r\}$. According to Lemma \ref{lemma2}, these provides us with $k$ 2-colorable sub-links, denoted by $\cup_{K_i\in B_1}K_i, \cdots, \cup_{K_i\in B_k}K_i$. Here $B_i$ ($1\leq i\leq k$) is a subset of $\{K_1, \cdots, K_n\}$. Together with Lemma \ref{lemma1}, one obtains that
\begin{center}
$\sum\limits_{i\in B_1}[K_i]=0, \cdots, \sum\limits_{i\in B_{k-1}}[K_i]=0$.
\end{center}
The reason why we drop the last equality is, the last equality $\sum\limits_{i\in B_k}[K_i]=0$ can be obtained from the first $k-1$ equalities. Actually, the fact that $R_k$ can be written as the sum of some elements of $\{R_{k+1}, \cdots, R_r\}$ can be replaced by the fact that $\sum\limits_{i=1}^rR_i=0$, since $r-\text{rank}_{\mathbb{Z}_2}(M_L)=k$. However, the assumption of Lemma \ref{lemma2} requires the sum of a proper subset of $\{R_1, \cdots, R_r\}$ equals zero.

We next demonstrate that any one of $\sum\limits_{i\in B_1}[K_i]=0, \cdots, \sum\limits_{i\in B_{k-1}}[K_i]=0$ can not be derived from the rest $k-2$ equalities, which follows that $n-\text{rank}_{\mathbb{Z}_2}(N_L)\geq k-1=r-\text{rank}_{\mathbb{Z}_2}(M_L)-1$. Suppose, to the contrary of the conclusion, that one equality can be derived from some others. Without loss of generality, let us assume $\sum\limits_{i\in B_1}[K_i]+\cdots+\sum\limits_{i\in B_j}[K_i]=0$ for some integer $j\leq k-1$. Recall that each equality $\sum\limits_{i\in B_1}[K_i]=0$ corresponds to some regions, and the sum of them is equal to zero. According to our method of choosing knot components (see the proof of Lemma \ref{lemma2}), if we put all the regions corresponding to $\sum\limits_{i\in B_1}[K_i]=0, \cdots, \sum\limits_{i\in B_j}[K_i]=0$ together, counted with multiplicity, it is not difficult to observe that if two regions are adjacent then the number of times they appear in this multiset have the same parity. It immediately follows that the multiplicity of each $R_i$ ($1\leq i\leq r$) in this multiset has the same parity. However, we know that $R_1$ appears once in this multiset but $R_k$ does not appear, which is a contradiction.
\end{itemize}
\end{proof}

\begin{example}
We use one example to explain how to use Theorem \ref{theorem4} to calculate the number of equivalence classes of link diagrams under region crossing changes. Consider the 4-component link diagram on a torus, see Figure \ref{figure11}. The matrix $N_L$ has the form
\begin{center}
$N_L=\begin{pmatrix}
1&0\\
1&0\\
0&1\\
0&1
\end{pmatrix}$,
\end{center}
hence rank$_{\mathbb{Z}_2}(N_L)=2$. According to Theorem \ref{theorem4}, we obtain rank$_{\mathbb{Z}_2}(M_L)=8-(4+1-2)=5$. It follows that $G_L$ has $2^{8-5}=8$ connected components.
\begin{figure}
\centering
\includegraphics{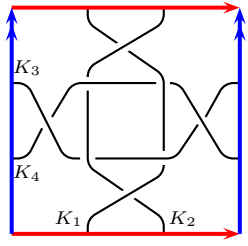}\\
\caption{A 4-component link diagram on $T^2$}\label{figure11}
\end{figure}
\end{example}

\begin{remark}
We would like to remark that actually the example above comes from \cite{Das2018}. In that article, Dasbach and Russell studied the number of equivalence classes, or equivalently, the number of components of $G_L$ via the Tait graph $T_L$ (which is required to be connected there) and its dual graph $T_L'$. A formula was given in \cite{Das2018} to calculate the number of components of $G_L$, which states that it equals $2^{2g+\text{dim ker}(\varphi|_{\mathcal{P}})}$. Here $\varphi$ denotes a map from $U^{\perp}$ to $H_1(\Sigma_g, \mathbb{Z}_2)$, where $U$, a subspace of $2^c$, is generated by the row vectors of $M_{T_L}$, and $\mathcal{P}$, a subspace of $U^{\perp}$, is generated by some row vectors corresponding to knot components. The reader is referred to \cite{Das2018} for more details. For this example, Dasbach and Russell showed that dim ker$(\varphi|_{\mathcal{P}})=1$, therefore the result coincides with our result above.
\end{remark}

\begin{example}
As another illustration, let us consider the knot diagram $K$ in Figure \ref{figure12}, where $[K]=p[m]+0[l]$ and $p$ is an odd integer. Notice that this diagram does not admit a checkerboard fashion coloring, and there does not exist a Tait graph associated to $K$. Therefore the formula mentioned above can not apply. Since $p$ is odd, one observes that rank$_{\mathbb{Z}_2}(N_L)=1$ and hence rank$_{\mathbb{Z}_2}(M_L)=p-(1+1-1)=p-1$. It follows that $G_K$ is connected. Note that this result also can be obtained from Corollary \ref{corollary4}.

\begin{figure}[h]
\centering
\includegraphics{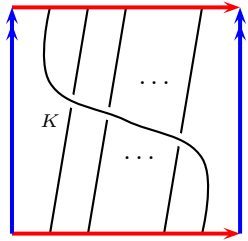}\\
\caption{A knot diagram on $T^2$}\label{figure12}
\end{figure}
\end{example}

\begin{corollary}\label{corollary5}
Let $K$ be a knot diagram on $\Sigma_g$ such that the projection is a cellularly embedded 4-valent graph, then $G_K$ is connected if and only if $g=0$.
\end{corollary}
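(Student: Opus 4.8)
The plan is to combine Theorem \ref{theorem4} with the Euler characteristic formula for a cellular embedding, and then read off the answer from a single rank inequality. First I would invoke Proposition \ref{proposition1}: since $K$ is a knot, $G_K$ is connected precisely when $\text{rank}_{\mathbb{Z}_2}(M_K)=c$, where $c$ is the number of crossings. Thus the whole corollary reduces to computing $\text{rank}_{\mathbb{Z}_2}(M_K)$ and comparing it with $c$.

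Next I would apply Theorem \ref{theorem4} with $n=1$, which yields $\text{rank}_{\mathbb{Z}_2}(M_K)=r-2+\text{rank}_{\mathbb{Z}_2}(N_K)$. Here $N_K$ is a single row vector, namely $[K]\in H_1(\Sigma_g,\mathbb{Z}_2)$ written in the chosen basis, so $\text{rank}_{\mathbb{Z}_2}(N_K)\in\{0,1\}$, equal to $1$ exactly when $[K]\neq 0$ and to $0$ otherwise. To control $r$ I would use the cellular embedding hypothesis: the projection of a knot is a connected $4$-valent graph with $V=c$ vertices (the crossings), and since every vertex has degree $4$ we get $E=2c$ edges; because the embedding is cellular every face is a disk, so the number of faces equals $r$. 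Euler's formula $V-E+F=2-2g$ then gives $r=c+2-2g$.

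Substituting, I obtain $\text{rank}_{\mathbb{Z}_2}(M_K)=c-2g+\text{rank}_{\mathbb{Z}_2}(N_K)$. When $g=0$ the homology group is trivial, so $\text{rank}_{\mathbb{Z}_2}(N_K)=0$ and the rank equals $c$, whence $G_K$ is connected. When $g\geq 1$, since $\text{rank}_{\mathbb{Z}_2}(N_K)\leq 1$ we get $\text{rank}_{\mathbb{Z}_2}(M_K)\leq c-2g+1\leq c-1<c$, so $G_K$ is disconnected. This establishes the equivalence.

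I do not expect a serious obstacle: once Theorem \ref{theorem4} and Euler's formula are in hand, the computation is forced. The only points needing a little care are the bookkeeping of the graph invariants, namely confirming that the vertices of the projection are exactly the $c$ crossings and that $4$-valence forces $E=2c$, and dispatching the degenerate crossingless case (on $S^2$ a diagram with $c=0$ gives a $G_K$ with a single vertex, trivially connected, while for $g\geq 1$ no crossingless diagram is cellularly embedded). With these verified, the strict inequality $c-2g+1<c$ for $g\geq 1$ does all the remaining work.
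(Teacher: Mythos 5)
Your proof is correct and follows essentially the same route as the paper: Proposition \ref{proposition1} to reduce connectivity to $\text{rank}_{\mathbb{Z}_2}(M_K)=c$, Theorem \ref{theorem4} with $n=1$, the Euler characteristic identity $c-2c+r=2-2g$ coming from cellularity, and the observation that $N_K$ is a $1\times 2g$ matrix so $\text{rank}_{\mathbb{Z}_2}(N_K)\leq 1$. The only cosmetic difference is that the paper handles the $g=0$ direction by citing Corollary \ref{corollary3}, whereas you obtain both directions uniformly from the same rank formula.
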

\begin{proof}
If $\Sigma_g=S^2$, the results follows from Corollary \ref{corollary3}. Conversely, if the projection of $K$ is cellularly embedded, then each region is homeomorphic to an open disk. It follows that $c-2c+r=2-2g$. We know that $G_K$ is connected if and only if rank$_{\mathbb{Z}_2}(M_K)=c$. According to Theorem \ref{theorem4}, $r-\text{rank}_{\mathbb{Z}_2}(M_K)=n+1-\text{rank}_{\mathbb{Z}_2}(N_K)$, which implies $2-2g+c-c=1+1-\text{rank}_{\mathbb{Z}_2}(N_K)$. Hence we conclude that $G_K$ is connected if and only if $\text{rank}_{\mathbb{Z}_2}(N_K)=2g$. Since the size of $N_K$ is $1\times2g$, it follows that $g=0$.
\end{proof}

\section{The original region crossing change revisited}\label{section5}
In the end of this paper, let us go back to the original region crossing change, i.e. the $(i, j)$th entry $m_{ij}$ of the incidence matrix $M_L=(m_{ij})_{r\times c}$ is equal to 1 if the crossing point $C_j$ is on the boundary of the region $R_i$, otherwise $m_{ij}=0$. With this setting, Theorem \ref{theorem4} is not valid anymore. Actually, Lemma \ref{lemma2} is no longer true. In order to see this, let us consider the knot diagram $K$ depicted in Figure \ref{figure12}. But this time, $p$ is allowed to run over all nonnegative integers. Now the matrix $M_K$ has the following form
\begin{center}
$M_K=\begin{pmatrix}
1&&&&&&1\\
1&1&&&&&\\
1&1&1&&&&\\
&1&1&1&&&\\
&&\ddots&\ddots&\ddots&&\\
&&&1&1&1&\\
&&&&1&1&1\\
&&&&&1&1
\end{pmatrix}_{p\times(p-1)}$.
\end{center}
Direct calculation shows that rank$_{\mathbb{Z}_2}(M_K)=p-2=r-2$ if $p$ is divisible by 3, otherwise rank$_{\mathbb{Z}_2}(M_K)=p-1=r-1$. In particular, if $p$ is not a multiple of 3 then the graph $G_K$ is connected, or equivalently speaking, each crossing point is region crossing change admissible. However, we know that rank$_{\mathbb{Z}_2}(N_K)=0$ if $p$ is even, and rank$_{\mathbb{Z}_2}(N_K)=1$ if $p$ is odd. Hence
\begin{itemize}
\item $r-\text{rank}_{\mathbb{Z}_2}(M_K)=1<2=1+1-\text{rank}_{\mathbb{Z}_2}(N_K)$, if $p=2$;
\item $r-\text{rank}_{\mathbb{Z}_2}(M_K)=2>1=1+1-\text{rank}_{\mathbb{Z}_2}(N_K)$, if $p=3$.
\end{itemize}
The relationship among rank$_{\mathbb{Z}_2}(M_K)$, rank$_{\mathbb{Z}_2}(N_K)$ and the number of regions seems a bit mysterious. We end this paper with a lower bound for rank$_{\mathbb{Z}_2}(M_L)$, with respect to the original region crossing change.

\begin{proposition}\label{proposition5}
Let $L$ be a $n$-component link diagram on $\Sigma_g$, then rank$_{\mathbb{Z}_2}(M_L)\geq r-n-1$.
\end{proposition}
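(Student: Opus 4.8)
The plan is to bound the left nullity of $M_L$, i.e. to show that $r-\text{rank}_{\mathbb{Z}_2}(M_L)\leq n+1$, since this is equivalent to the asserted lower bound on the rank (the left kernel of $M_L$ is spanned by the sets of regions whose rows sum to zero, and its dimension is $r-\text{rank}_{\mathbb{Z}_2}(M_L)$). A vector in the left kernel is a set $S$ of regions, which I will encode as a coloring $\gamma$ that is $1$ on the regions of $S$ and $0$ elsewhere, with the property that at every crossing the number of \emph{distinct} incident regions lying in $S$ is even; call such an $S$ \emph{balanced}, and let $\mathrm{Rel}$ denote the space of balanced colorings. The guiding picture is the $1$-chain $Z$ carried by those arcs of $L$ across which the two local regions receive different colors, so that $Z$ is the boundary of the ``white'' set $S$. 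At a crossing with four distinct incident regions the balance condition says exactly that $Z$ does not turn a corner there but runs straight along the two strands, so across such a crossing membership of an arc in $Z$ is transported unchanged along each component $K_i$; this is the mechanism underlying Lemma \ref{lemma2}. The essential new feature of the original rule is that at a crossing with an \emph{odd} number of distinct incident regions (three, as at a nugatory point, or one) the balance condition is no longer a condition on $Z$ alone but genuinely involves the coloring $\gamma$. This is precisely where Lemma \ref{lemma2} breaks down and where the dependence on the diagram, such as the $p\bmod 3$ phenomenon, enters.

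The heart of the argument is a single linear map. Fix one region $R_0$ and, on each component $K_i$, one arc $e_i$; define
\[
\Phi:\mathrm{Rel}\longrightarrow \mathbb{Z}_2^{\,n+1},\qquad \Phi(S)=\bigl(\gamma(R_0),\,Z_{e_1},\dots,Z_{e_n}\bigr),
\]
where $Z_{e_i}\in\mathbb{Z}_2$ records whether the two regions adjacent to $e_i$ receive different colors. Linearity is immediate, since $\gamma(R_0)$ and each $Z_{e_i}$ are $\mathbb{Z}_2$-linear functionals of $S$. If I can show $\Phi$ is injective, then $\dim\mathrm{Rel}\leq n+1$, which is exactly $r-\text{rank}_{\mathbb{Z}_2}(M_L)\leq n+1$, and hence $\text{rank}_{\mathbb{Z}_2}(M_L)\geq r-n-1$. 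Note that this allows equality and a strict inequality to coexist, as the examples demand, because $\Phi$ need not be surjective.

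To prove injectivity I would take a balanced $S$ with $\Phi(S)=0$ and show $S=\varnothing$. Starting from $\gamma(R_0)=0$, I propagate the coloring from region to adjacent region: across any arc the color jumps by the value of $Z$ on that arc. The straight-strand behavior at the four-distinct crossings carries the base data $Z_{e_i}=0$ unchanged along $K_i$, forcing $Z$ to vanish on every arc reached without meeting an odd-type crossing; thus the only way $\Phi(S)=0$ could fail to give $\gamma\equiv 0$ is through the odd-type crossings, and the plan is to use the balance equations there, now as honest equations in the colors of the three (or one) incident regions, to pin the colors down consistently and conclude that the anchors $\gamma(R_0)$ and $\{Z_{e_i}\}$ already determine $\gamma$ on all of $\Sigma_g$. \textbf{The main obstacle} is exactly this last step: organizing the propagation through the odd-type crossings so that it is globally consistent while using no more than the $n+1$ chosen anchors. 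Concretely, one must rule out a nontrivial balanced coloring that vanishes across every $e_i$ and at $R_0$ yet is nonzero; this is the configuration that three-region crossings make possible and that defeats the clean homological count of Theorem \ref{theorem4}. Controlling it, for instance by traversing each component and converting each odd-type crossing into one linear relation among the incident colors and then checking that these relations are compatible with the $n+1$ anchors, is where the real work lies.
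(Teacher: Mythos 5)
Your reduction to bounding the left kernel, and your observation that at a crossing with four distinct incident regions the balance condition is exactly straight transport of $Z$ along the two strands, are both correct, and $r-\mathrm{rank}_{\mathbb{Z}_2}(M_L)\leq n+1$ is indeed the inequality to prove. But the proof stops exactly where it would have to start: injectivity of $\Phi$ is never established, and for a fixed arbitrary choice of anchors it is simply false. Concretely, take the paper's diagram of Figure \ref{figure12} (Section \ref{section5}): for every $p\geq 3$ each column of the displayed $M_K$ contains exactly three $1$'s, so \emph{every} crossing is incident to exactly three distinct regions, the diagram contains no four-distinct crossing at all, and your transport mechanism produces no information on any arc. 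For $p=3$ the matrix $M_K$ is the $3\times 2$ all-ones matrix, so the left kernel is the $2$-dimensional space of even-weight vectors in $\mathbb{Z}_2^3$; for any choice of the arc $e_1$, the indicator vector of the two regions flanking $e_1$ is a nonzero kernel element with $Z_{e_1}=0$, and unless $R_0$ happens to be one of those two flanking regions this element lies in $\ker\Phi$, so injectivity fails. Thus $\Phi$ is injective only for specially adapted anchors, and nothing in your sketch produces such anchors in general; your closing plan (converting each odd-type crossing into a linear relation among the incident colors and checking compatibility with the $n+1$ anchors) is a restatement of the dimension bound $\dim\mathrm{Rel}\leq n+1$, not an argument for it. The $p\bmod 3$ dichotomy in this same family ($\dim\mathrm{Rel}=2$ if $3\mid p$, and $1$ otherwise, against the uniform bound $n+1=2$) shows the kernel is governed by global arithmetic of the diagram that local propagation from $n+1$ base points does not obviously detect; this is the unresolved heart of the matter, as you yourself flag.

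The paper's proof takes a completely different, choice-free route worth comparing with. Realize $\Sigma_g$ as a $4g$-gon with identified edges and extend $L$ to a planar $n$-component diagram $L'$ by adding parallel arcs outside the polygon (Figure \ref{figure13}). Each region of $L$ meeting the polygon's boundary away from the vertices has the same region-crossing-change effect as a suitable union of regions of $L'$, and the unique region of $L$ containing the vertices acts like all regions of $B\cup C$ combined; hence row operations bring $M_{L'}$ to the block form $\begin{pmatrix} M_L & 0\\ \ast & \ast \end{pmatrix}$, and the planar formula $\mathrm{rank}_{\mathbb{Z}_2}(M_{L'})=r(L')-n-1$ (Proposition \ref{proposition2}) yields $r(L')-n-1\leq \mathrm{rank}_{\mathbb{Z}_2}(M_L)+\bigl(r(L')-r(L)\bigr)$, which is the desired bound. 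In effect the planarization manufactures, all at once, the globally consistent relations that your local propagation would have to assemble by hand at the odd-type crossings; if you want to salvage your direct approach, you need an existence proof for good anchors, and the most plausible source of one is precisely this planarization.
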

\begin{proof}
$\Sigma_g$ can be constructed from a regular $4g$-gon by identifying pairs of edges. For each pair of edges needed to be identified, one can add some parallel curves outside of the polygon, connecting the intersection points between $L$ and these two edges. Now we obtain a new link diagram $L'$ on the plane, which has the same number of components as $L$. See Figure \ref{figure13} for an example. On the left hand side we have a 3-component link diagram on $\Sigma_2$ with 17 crossing points, on the right hand side after adding some curves we obtain a 3-component link diagram on the plane, which now has 29 crossing points. For simplicity, we only draw the projections and ignore the crossing information.

\begin{figure}[h]
\centering
\includegraphics{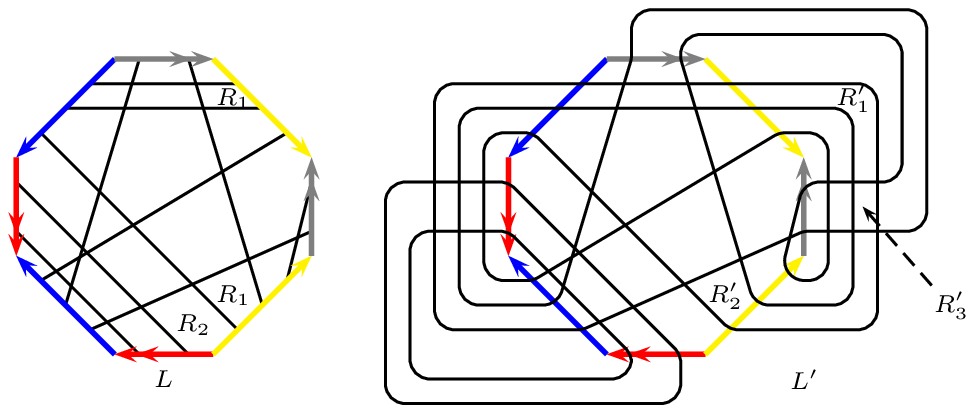}\\
\caption{From a link diagram on $\Sigma_g$ to a link diagram on $R^2$}\label{figure13}
\end{figure}

Let us use $R_1, \cdots, R_{r(L)}$ and $R_1', \cdots, R_{r(L')}'$ to denote the regions of $\Sigma_g\setminus L$ and $R^2\setminus L'$ respectively. Now the set $\{R_1', \cdots, R_{r(L')}'\}$ can be divided into the disjoint union of three subsets $\{R_1', \cdots, R_{r(L')}'\}=A\cup B\cup C$, where $A$ includes the regions that have nonempty intersection with the interior of the polygon except those which contain some vertices of the polygon, $B$ denotes the set of regions which contain some vertices of the polygon, and the rest regions constitute the subset $C$. Notice that the unique unbounded region belongs to the subset $B$ and each region of $C$ is sharped as a rectangle. The following are two noteworthy facts:
\begin{itemize}
\item If a region of $L$ meets the boundary of the polygon, but does not meet any vertex of the polygon, then there exist a finite sequence of regions of $L'$ such that applying region crossing change on them have the same effect. For example, in Figure \ref{figure13} one observes that applying region crossing change on $R_1$ of $L$ and $R_1'\cup R_2'\cup R_3'$ of $L'$ have the same effect.
\item Among the regions $R_1, \cdots, R_r$, there exists only one region which contains all the vertices of the polygon. In Figure \ref{figure13}, we use $R_2$ to denote it. Notice that applying region crossing change on this region is equivalent to take region crossing change on all the regions in $B\cup C$.
\end{itemize}
The two observations above imply that after some elementary row transformations the matrix $M_{L'}$ has the form
\begin{center}
$\begin{pmatrix}
M_L&0\\
\ast&\ast
\end{pmatrix}$.
\end{center}
Since rank$_{\mathbb{Z}_2}(M_{L'})=r(L')-n-1$ (Proposition \ref{proposition2}), it follows rank$_{\mathbb{Z}_2}(M_L)\geq r(L)-n-1$.
\end{proof}

\begin{remark}
In fact, the proof above also provides a upper bound for rank$_{\mathbb{Z}_2}(M_L)$. A key observation is, all the row vectors in $M_{L'}$ corresponding to the regions in $C$ are actually linearly independent. This can be verified directly on the diagram, note that each region in $C$ is a rectangle. Then it follows that
\begin{center}
rank$_{\mathbb{Z}_2}(M_L)\leq |A|+|B|-n-1$.
\end{center}
Here $|A|$ and $|B|$ denote the cardinality of $A$ and $B$ respectively.
\end{remark}

\begin{remark}
Proposition \ref{proposition5} provides a very rough lower bound for rank$_{\mathbb{Z}_2}(M_L)$. Actually, the difference between rank$_{\mathbb{Z}_2}(M_L)$ and $r-n-1$ could be arbitrarily large. Consider a link diagram $L$ on $T^2$ such that the projection of $L$ consists of $n-1$ parallel meridians and one longitude. Then one computes
\begin{center}
rank$_{\mathbb{Z}_2}(M_L)-(r-n-1)=n-2-(n-1-n-1)=n$.
\end{center}
It is an interesting question to find a precise formula for rank$_{\mathbb{Z}_2}(M_L)$.
\end{remark}

\section*{Acknowledgement}
J. Cheng, J. Xu and J. Zheng are supported by an undergraduate research project of Beijing Normal University. Z. Cheng is supported by NSFC 11771042 and NSFC 11571038.

\end{document}